\documentclass{amsart}[12pt]

\usepackage{graphicx}
\usepackage{amsmath,amsthm,amssymb} 
\usepackage{mathtools}
\usepackage{color}
\usepackage{enumerate}
\usepackage{tikz}
\usepackage{calligra}
\usepackage{chngcntr}

\parskip=\smallskipamount

\newtheorem{theorem}{Theorem}
\newtheorem{proposition}[theorem]{Proposition}
\newtheorem{corollary}[theorem]{Corollary}
\newtheorem{lemma}[theorem]{Lemma}

\newtheorem{definition}[theorem]{Definition}

\newtheorem{question}[theorem]{Question}

\newcommand{\tref}[1]{Theorem~\textup{\ref{#1}}}
\newcommand{\pref}[1]{Proposition~\textup{\ref{#1}}}
\newcommand{\cref}[1]{Corollary~\textup{\ref{#1}}}
\newcommand{\lref}[1]{Lemma~\textup{\ref{#1}}}

\newcommand{\dref}[1]{Definition~\textup{\ref{#1}}}

\newcommand{\R}{\mathbb{R}}
\newcommand{\C}{\mathbb{C}}
\newcommand{\N}{\mathbb{N}}
\newcommand{\B}{\mathbb{B}}

\newcommand{\Olo}{\mathcal{O}}

\newcommand\Aut{\mathrm{Aut}}

\newcommand\wt{\widetilde}
\newcommand{\Id}{\mathrm{Id}}
\newcommand{\dist}{\mathrm{dist}}

\theoremstyle{plain} 
\newcommand{\thistheoremname}{}
\newtheorem{genericthm}{\thistheoremname}

\newtheorem*{genericthm*}{\thistheoremname}
\newenvironment{namedthm*}[1]
  {\renewcommand{\thistheoremname}{#1}%
   \begin{genericthm*}}
  {\end{genericthm*}}

\relpenalty=10000
\binoppenalty=10000

\begin{document}

\title[A parametric jet-interpolation theorem]{A parametric jet-interpolation theorem for holomorphic automorphisms of $\C^n$}

\author{Riccardo Ugolini}
\address{Institute of Mathematics, Physics and Mechanics, Jadranska 19, 1000 Ljubljana, Slovenia}
\address{Faculty of Mathematics and Physics, University of Ljubljana, Jadranska 19, 1000 Ljubljana, Slovenia}

\subjclass[2010]{Primary: 32A10}

\date{\today} 

\counterwithin{theorem}{section}

\keywords{
Holomorphic automorphism, jet interpolation, Stein manifold, Vaserstein problem}

\begin{abstract}
We consider the problem of interpolating a holomorphic family of nondegenerate holomorphic jets on $\C^n$
for $n>1$, parametrized by points in a Stein manifold,  by a holomorphic family of automorphisms of $\C^n$.
We show that under a suitable topological condition it is possible to find a solution, 
thereby generalizing results of Forstneri\v c and Varolin concerning the nonparametric jet interpolation
by automorphisms.
\end{abstract}

\maketitle

\section{Introduction} \label{sec:intro}

The study of the holomorphic automorphism group $\Aut(\C^n)$ of the complex Euclidean space $\C^n$ of dimension $n>1$ 
has been an important topic in complex analysis ever since the seminal work of Rosay and Rudin \cite{RosayRudin1988}. The theory became especially interesting with the advent of the Anders\'en-Lempert theory \cite{AndersenLempert1992,ForstnericRosay1993}.
For surveys of this subject we refer to \cite[Chapter 4]{ForstnericBook} and \cite{KalimanKutzSurvey}.

In 1999 it was proved by Forstneri\v c \cite{Forstneric1999} that every finite order jet of a locally biholomorphic map at a point of $\C^n$ can be matched by the jet of a holomorphic automorphism of $\C^n$. Furthermore, Buzzard and  Forstneri\v c
showed in \cite{BuzzardForstneric2000} that this can be done simultaneously at all points of a tame discrete set in $\C^n$. 
Varolin \cite{Varolin2000} established the analogous result for jet-interpolation at one point with $\C^n$ replaced by any 
Stein manifold with the {\em holomorphic density property} and, more generally, in any Lie algebra of
vector fields with the density property. 

The aim of the present paper is to extend this jet interpolation theorem for holomorphic automorphisms of $\C^n$ 
to a holomorphic family of jets parametrized by points of a Stein manifold. Our main result is the following.

\begin{theorem} \label{interpol}
Let $X$ be a finite dimensional Stein space, $\{ a_j\}_{j\in\N}, \{ b_j\}_{j\in\N} \subset \C^n$ $(n>1)$ be tame sequences of points and 
$m_j \in \N=\{1,2,\ldots\}, \ j>0$. For every $j\in \N$ let $P^j \colon X \rightarrow J^{m_j}_{a_j,b_j} (\C^n)$  be a holomorphic familiy 
of $m_j$-jets such that $P_x^j(a_j)=b_j,\ \forall x \in X$.
Then there exists a nullhomotopic holomorphic map $F \colon X \to \Aut(\C^n)$ such that
\[
	F_x(z)=P_x^j(z)+O(|z-a_j|^{m_j+1}) \ \text{for}\ z \rightarrow a_j, \quad j\in\N, \ x \in X
\]
if and only if the linear part map $Q^j \colon X \to GL_n (\C)$ of $P^j$ at the point $a_j$ is nullhomotopic for every $j \in \N$. 
Furthermore, if $JP_x^j(z)=1+O(|z-a_j|^{m_j+1})$ for all $x \in X$, $j>0$ and the sequences are very tame, then we can take $F_x$ to be volume preserving for all $x \in X$. (Here, $J$ denotes the Jacobian determinant with respect to the variable $z\in\C^n$.)
\end{theorem}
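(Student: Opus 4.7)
If $F=F_1$ is connected to a constant $F_0$ by a continuous path $F_t$ in the space of maps $X\to\Aut(\C^n)$, then $x\mapsto D(F_t)_x(a_j)$ is a continuous path in the space of maps $X\to GL_n(\C)$ joining the constant $D(F_0)(a_j)$ to $Q^j$, so every $Q^j$ is nullhomotopic.

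\textbf{Sufficiency plan.} The strategy is to adapt the Forstneri\v c--Varolin inductive scheme to the Stein-parametric setting. First, using tameness, find an automorphism of $\C^n$ sending $a_j$ to $b_j$; post-composing reduces to $a_j=b_j$. Next, the nullhomotopy of each $Q^j\colon X\to GL_n(\C)$ together with the Oka property of $GL_n(\C)$ lets one construct a nullhomotopic holomorphic family $L\colon X\to\Aut(\C^n)$ whose differential at every $a_j$ equals $Q^j_x$, assembled as a finite composition of parametric shears. Replacing $P^j_x$ by $L_x^{-1}\circ P^j_x$ reduces to the case $Q^j_x\equiv\Id$.

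\textbf{Inductive step.} Build holomorphic families $F^{(k)}\colon X\to\Aut(\C^n)$ interpolating $P^1,\dots,P^k$ and satisfying $\sup_{x,\,z\in K_k}|F^{(k)}_x(z)-F^{(k-1)}_x(z)|<\varepsilon_k$ for an exhaustion $K_k\nearrow\C^n$ with $\sum\varepsilon_k<\infty$. At step $k+1$, choose a parameter-dependent holomorphic vector field $V_x$ on $\C^n$ that vanishes to order $\max_{j\le k}m_j+1$ at $a_1,\dots,a_k$ and whose $m_{k+1}$-jet at $a_{k+1}$ produces the required correction to $F^{(k)}$. Approximate the time-one flow of $V_x$ by a composition of parameter-dependent holomorphic shears---the key technical ingredient, a \emph{parametric Anders\'en--Lempert theorem}---uniformly in $x\in X$ on $K_k$; pre-composition with $F^{(k)}$ yields $F^{(k+1)}$. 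Tameness of $\{a_j\}$ permits choosing $K_k$ so that $a_{k+1}\notin K_k$ while still exhausting $\C^n$, and controls the distortion so that the corrective shears can be made arbitrarily small on $K_k$. The limit $F=\lim F^{(k)}$ then lies in $\Aut(\C^n)$ with its inverse, realizes all prescribed jets, and is nullhomotopic because every stage is produced from the identity by parameter-dependent flows.

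\textbf{Volume-preserving refinement and main obstacle.} The same scheme applies with divergence-free vector fields via the density property for the volume-preserving Lie algebra on $\C^n$; the very-tame hypothesis supplies the sharper geometric separation needed because the divergence-free class admits less approximation flexibility. The principal difficulty throughout is the parametric Anders\'en--Lempert step: producing a \emph{holomorphic} (not merely continuous) family of shear decompositions that approximates a given parameter-dependent flow while exactly preserving the jets already matched at $a_1,\dots,a_k$. This is where the parametric Oka principle for $\Aut(\C^n)$ (in the spirit of Forstneri\v c) must enter, and coordinating these approximations consistently over $X$ together with the simultaneous interpolation at infinitely many points absorbs essentially all of the technical work.
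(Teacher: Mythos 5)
Your necessity argument is essentially the paper's own and is fine. The sufficiency direction, however, contains a genuine gap at the decisive step.

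You assert that ``the nullhomotopy of each $Q^j\colon X\to GL_n(\C)$ together with the Oka property of $GL_n(\C)$ lets one construct a nullhomotopic holomorphic family $L\colon X\to\Aut(\C^n)$ whose differential at every $a_j$ equals $Q^j_x$, assembled as a finite composition of parametric shears.'' This is not a consequence of the Oka property. The Oka principle for $GL_n(\C)$ (or $SL_n(\C)$) tells you that homotopy classes of continuous and holomorphic maps $X\to GL_n(\C)$ coincide; it says nothing about factoring a nullhomotopic holomorphic map $X\to SL_n(\C)$ as a finite product of holomorphic unipotent (upper/lower triangular) matrix-valued functions, which is exactly what you need to realize the linear part of the jet by a composition of parameter-dependent shear automorphisms of $\C^n$. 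That factorization is the \emph{holomorphic Vaserstein problem}, solved by Ivarsson and Kutzschebauch, and it is precisely the paper's central technical import (Theorem~\ref{vaser}). It is a deep theorem, not an Oka-theoretic triviality, and it is also the source of the finite-dimensionality hypothesis on $X$ which your proposal never invokes. Without this input your construction of $L$ does not go through. (A smaller additional gap: your $L$ is asked to match the linear parts $Q^j$ at \emph{all infinitely many} $a_j$ simultaneously; the paper handles one point at a time via Proposition~\ref{main} and passes to the limit, and it is not clear your single $L$ can be built.)

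Your inductive step replaces the paper's explicit shear constructions (based on the homogeneous-polynomial bases of \cite{KutzLodin2013}) by an appeal to a ``parametric Anders\'en--Lempert theorem,'' and you honestly flag that this step absorbs all the technical work. But you misidentify its nature: the needed ingredient is not the parametric Oka principle for $\Aut(\C^n)$, and the paper does not proceed through flow approximation at all. It writes each homogeneous correction directly as a composition of parametric shears whose generating functions are produced by an elementary one-variable interpolation lemma (Lemma~\ref{lem:exist}), then controls the infinite composition with the convergence lemma of \cite{KutzRamosArXiv}. The essential missing idea in your proposal remains the Vaserstein factorization; everything else can in principle be made to work, but that step cannot be bypassed by general Oka theory.
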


Recall that a discrete sequence of points $\{a_j\}_{j\in \N}\subset \C^n$ without repetition 
is called {\em tame} if it can be mapped to the sequence 
$\{(j,0,\dots, 0)\}_{j\in\N}$ by a holomorphic automorphism of $\C^n$; see \dref{def:tame}. (This notion 
was introduced by Rosay and Rudin \cite{RosayRudin1988}. The order of points in the sequence does not
matter and hence there is a well defined notion of a tame discrete set.) 
A map $F \colon X \to \Aut(\C^n)$ from a complex manifold $X$ into the holomorphic automorphism group
of $\C^n$ is said to be holomorphic if the evaluation map $X \times \C^n\to \C^n$ given by
$F(x)(z)=:F_x(z)$ is holomorphic; see Definition \ref{def:holomap}.
A map is nullhomotopic if it is homotopic to a constant map; see Definition \ref{def:nullhomotopic}.
An automorphism $F=(F_1,\ldots,F_n) \in\Aut(\C^n)$ is said to be  {\em volume preserving}
if its Jacobian determinant $JF=\det F'(z)$ equals one at each point $z\in \C^n$; 
equivalently, if $dF_1\wedge \cdots \wedge dF_n=dz_1 \wedge \cdots \wedge dz_n$.
We denote by $J^k_{p,q}(\C^n)$ the space of all nondegenerate $k$-jets of holomorphic maps from a neighborhood of
$p$ to a neighborhood of $q$ in $\C^n$ sending $p$ to $q$; this is a open subset of a complex Euclidean space whose dimension
depends on $k$ and $n$. We refer to Section \ref{sec:main} for formal definitions of these notions.

In the special case of a finite number of families of jets $P^j \colon X \rightarrow J^{K}_{a_j,a_j} (\C^n)$,
$j=1,\ldots, j_0, \ K \in \N$, tangent to the identity map at the points $a_1,\ldots,a_{j_0}\in\C^n$, 
Theorem \ref{interpol} was established by Kutzschebauch and Lodin \cite[Lemma A.4]{KutzLodin2013} 
who also proposed a solution for 
finite but arbitrary (not necessarily tangent to the identity) nondegenerate families of jets. 
Here we provide a detailed analysis and obtain jet interpolation on any infinite tame set in $\C^n$.
The special case of  Theorem \ref{interpol} for a finite family of jets, not necessarily tangent to the identity,
is an easy consequence of our main technical tool,  \pref{main}, which deals with the possibility of 
interpolating a family of jets at one point of $\C^n$ while at the same time interpolating the identity map 
up to a given finite order $N$ at finitely many other points of $\C^n$.

The basic jet interpolation theorems of Forstneri\v c \cite{Forstneric1999} and Varolin \cite{Varolin2000}
proved useful in constructions of holomorphic automorphisms with prescribed dynamical behaviour  and were 
applied in many subsequent works, examples being the theorem of Peters and Wold on non-autonomous basins 
of attraction of automorphisms \cite{PetersWold2005} or the work of Forstneri\v c, Ivarsson, Kutzschebauch and Prezelj on holomorphic embeddings of Stein manifolds into Euclidean spaces with interpolation on a discrete set 
\cite{ForstnericIvarssonKutzPrezelj2007}. We hope that our parametric version of
this result will provide a useful tool in the investigation of dynamics of families of automorphisms, and especially
in the study of bifurcation phenomena in such families. In this context, we wish to mention that Diederich, Forn\ae ss and Wold have recently established the case of a smooth parameter \cite{DiederichFornessWold2014}.

The hypothesis in Theorem \ref{interpol} that the linear part $Q^j \colon X \to GL_n (\C)$ of the jet map $P^j$
be nullhomotopic is unnecessary in the case of a single family of jets; the following result may be of some independent interest. We observe that the resulting interpolating map needs not to be nullhomotopic.

\begin{proposition} \label{onepoint}
Let $X$ be a reduced Stein space and $P:X \to J^k_{0,0}(\C^n)$ $(k\in\N,\ n>1)$ be a holomorphic family of jets fixing the origin of $\C^n$.
Then there exists  a holomorphic map $F:X \to \Aut(\C^n)$ such that the following holds for any point $x \in X$:
\begin{align*}
	F_x(z)&=P_x(z)+O(|z|^{k+1}) \text{ for } z \rightarrow 0.
\end{align*}
Furthermore, if $JP_x(z)=1+O(|z|^{k+1})$ holds for every $x \in X$, then we can take $F_x\in \Aut(\C^n)$ to be volume preserving 
for all $x \in X$.
\end{proposition}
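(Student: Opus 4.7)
The plan is to peel off the linear part of $P_x$ and then interpolate the remaining identity-tangent jet by an inductive composition of parametrically varying shear-type automorphisms.

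Let $Q_x \in GL_n(\C)$ denote the derivative of $P_x$ at the origin; nondegeneracy of the jet makes $Q$ a holomorphic map $X \to GL_n(\C) \hookrightarrow \Aut(\C^n)$. Replacing $P_x$ by $\wt P_x := Q_x^{-1}\circ P_x$ reduces the problem to the case in which $\wt P_x$ is tangent to the identity at $0$: if a holomorphic family $G\colon X\to\Aut(\C^n)$ matches $\wt P_x$ to order $k$, then $F_x:=Q_x\circ G_x$ settles the original statement. Note that no nullhomotopy condition on $Q$ is being imposed, which is why a single interpolation point carries no topological obstruction.

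Build $G_x$ by induction on the jet order. Start with $G^{(1)}_x=\Id$; supposing $G^{(j)}_x$ is already constructed with $G^{(j)}_x(z)=\wt P_x(z)+O(|z|^{j+1})$, the jet $\wt P_x\circ (G^{(j)}_x)^{-1}$ has the shape $z+H^{(j)}_x(z)+O(|z|^{j+2})$, where $H^{(j)}_x$ is a vector-valued homogeneous polynomial of degree $j+1$ whose coefficients depend holomorphically on $x$. By the Andersén--Lempert decomposition employed in Forstneri\v c \cite{Forstneric1999} and Kutzschebauch--Lodin \cite{KutzLodin2013}, $H^{(j)}_x$ admits a decomposition $H^{(j)}_x=V_{1,x}+\cdots+V_{N,x}$ into finitely many complete polynomial shear/overshear fields, with each $V_{\ell,x}$ depending \emph{linearly} on the coefficients of $H^{(j)}_x$, hence holomorphically on $x$. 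The time-$1$ flow $\phi^1_{V_{\ell,x}}$ of each $V_{\ell,x}$ is an automorphism of $\C^n$ depending holomorphically on $x$, and since every $V_{\ell,x}$ is homogeneous of degree $j+1$, composition corrections are of order $\geq j+2$; hence the $(j+1)$-jet at $0$ of $\phi^1_{V_{N,x}}\circ\cdots\circ\phi^1_{V_{1,x}}$ equals $\Id+H^{(j)}_x$. Setting $G^{(j+1)}_x := \phi^1_{V_{N,x}}\circ\cdots\circ\phi^1_{V_{1,x}}\circ G^{(j)}_x$ completes the step; after $k-1$ iterations, $G_x:=G^{(k)}_x$ yields the desired automorphism.

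For the volume-preserving addendum, the hypothesis $JP_x=1+O(|z|^{k+1})$ forces $\det Q_x\equiv 1$ by evaluation at $0$, so $Q_x\in SL_n(\C)$, and each $H^{(j)}_x$ produced by the induction is divergence-free. Replace the shear decomposition by the parallel divergence-free decomposition (using only volume-preserving shears), which spans the space of divergence-free homogeneous polynomial vector fields of each degree; again linearity of the decomposition preserves holomorphic dependence in $x$, and the induction yields a volume-preserving $G_x$, whence $F_x=Q_x\circ G_x$ is volume-preserving.

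The principal subtlety is the claim that the shear/overshear (respectively divergence-free shear) decomposition of a homogeneous polynomial vector field of prescribed degree can be chosen \emph{linearly} in the field's coefficients. This is a finite-dimensional linear-algebra statement on the space of homogeneous polynomial vector fields of that degree, and once established the parametric iteration is essentially automatic, holomorphic dependence being inherited at every step from linearity of the chosen decomposition.
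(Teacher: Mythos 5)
Your proposal is correct and follows essentially the same route as the paper: peel off the linear part $Q_x$ as a single automorphism (which is where the Vaserstein theorem and the nullhomotopy hypothesis are avoided in the one-point case), then match the remaining identity-tangent jet order by order using a basis of homogeneous polynomial shear and overshear fields whose coefficients depend linearly, hence holomorphically, on $x$; the paper cites \cite[Lemma A.6]{KutzLodin2013} for exactly the linear decomposition you flag as the principal subtlety. The only cosmetic difference is that the paper inserts $Q_x$ as the factor $S_1^x$ in a telescoping composition rather than conjugating by it, and it writes the higher-order corrections as explicit shears and overshears rather than as time-$1$ flows, but these are equivalent formulations of the same construction.
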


Let us indicate why this result holds without any topological condition. Let 
\[
	\mathcal{S}=\{F\in\Aut(\C^n): F(z)=z+O(|z|^2)\  \text{for}\  z\to 0\}.
\]
Clearly, $\mathcal{S}$ is a subgroup of  $\Aut(\C^n)$, called the {\em Schwarz subgroup} by Forstneri\v c and L\'arusson \cite{ForstnericLarusson2014}. Note that $\mathcal{S}$ is contractible; indeed, $F_t(z)=t^{-1}F(tz)$ for $z\in \C^n$
and $t\in (0,1]$ defines a homotopy in $\mathcal{S}$ from $F_1=F\in \mathcal{S}$ to $F_0=\Id$.
Every $F\in \Aut(\C^n)$ with $F(0)=0$ can be written uniquely in the form $F=Q \circ H$ for 
$Q=F'(0)\in GL_n(\C)$ and $H \in \mathcal{S}$.  This easily implies that the group $\Aut(\C^n)$ 
retracts onto $GL_n(\C)$, hence all its topology is carried by $GL_n(\C)$.

In the context of Proposition \ref{onepoint}, let $Q \colon X\to GL_n(\C)$ denote the linear part 
map of the jet $P\colon X \to J^k_{0,0}(\C^n)$ at $0\in\C^n$. Then, $P=Q\circ \wt P$ where $\wt P\colon X\to J^k_{0,0}(\C^n)$
has linear part at $0\in\C^n$ equal the identity. It now suffices to find a holomorphic map $\wt F\colon X\to \mathcal{S}$
whose $k$-jet map at the origin equals $\wt P$. Since the group $\mathcal{S}$ is contractible, there is 
no topological obstruction for the existence of such $\wt F$. Clearly, this approach fails when trying to
interpolate at more than one point of $\C^n$.

The proof of our main technical tool, \pref{main}, will follow the line of \cite[proof of Proposition 2.1]{Forstneric1999};
however, nontrivial difficulties arise when considering holomorphic families of automorphisms. 
As indicated above, the main problem is caused by the linear part of the jet and is of topological nature. 
To overcome this problem, we shall use 
the solution to the {\em holomorphic Vaserstein problem}, obtained  by Ivarsson and  Kutzschebauch \cite{IvarssonKutz2012}, 
which itself requires a suitable topological condition to hold. Given the importance of their result for our work, 
we state it here explicitly for the convenience of the reader.

\begin{theorem}[Ivarsson and Kutzschebauch \cite{IvarssonKutz2012}] \label{vaser}
Let $X$ be a finite dimensional reduced Stein space and $f:X \rightarrow SL_n(\C)$ be a
nullhomotopic holomorphic mapping. Then there exist an integer $K \in \N$ and holomorphic mappings 
\[
	G_1,\dots, G_K: X \rightarrow \C^{n(n-1)/2}
\]
such that $f$ can be written as a product of upper and lower diagonal unipotent matrix functions
of $x\in X$:
\[
	f(x)=
\left( \begin{array}{ccc}
	1 & 0 \\
	G_1(x) & 1 \end{array} \right)
\left( \begin{array}{ccc}
	1 & G_2(x) \\
	0 & 1 \end{array} \right) 
	\dots
\left( \begin{array}{ccc}
	1 & G_K(x) \\
	0 & 1 \end{array} \right).
\]
\end{theorem}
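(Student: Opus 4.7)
The plan is to combine an algebraic factorization fact with the Oka principle. For each integer $K$, consider the polynomial map
\[
	\Pi_K \colon \C^{Kn(n-1)/2} \longrightarrow SL_n(\C)
\]
sending $(G_1,\ldots,G_K)$ to the alternating product of upper and lower unipotent block matrices appearing in the statement. The theorem is equivalent to saying that every nullhomotopic holomorphic $f\colon X \to SL_n(\C)$ admits a holomorphic lift through $\Pi_K$ for some $K$ depending only on $n$ and $\dim X$.

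The first step is to establish the analogous statement for continuous maps: for $K$ large enough, any continuous nullhomotopic $f\colon X\to SL_n(\C)$ admits a continuous lift through $\Pi_K$. This is a consequence of Suslin's stability and Vaserstein's results on the elementary subgroup of $SL_n$ over a commutative ring, combined with the fact that $SL_n(\C)$ is simply connected, so that nullhomotopy implies being in the elementary subgroup of $C(X,\C)$; finite dimensionality of $X$ is used to bound the number of factors uniformly.

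The second and harder step is to upgrade this continuous lift to a holomorphic one. I would analyse the singularities of $\Pi_K$ and construct a stratification of $SL_n(\C)$ by locally closed smooth subvarieties $S_\alpha$ such that each restriction
\[
	\Pi_K^{-1}(S_\alpha) \longrightarrow S_\alpha
\]
is a holomorphic fiber bundle whose total space and fibers are Oka (e.g.\ complements of algebraic hypersurfaces in affine space). Forstneri\v c's parametric Oka principle for stratified elliptic submersions over Stein bases would then deform the continuous lift from the previous step into a holomorphic lift projecting to $f$.

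The main obstacle is precisely the construction of this stratification, since $\Pi_K$ is far from being a submersion. I expect the argument to proceed by induction on $n$: given $f\colon X\to SL_n(\C)$, one multiplies on the right by a holomorphic upper unipotent factor chosen to clear all but one entry of the last column of $f(x)$, then by a lower unipotent factor reducing the problem to $SL_{n-1}(\C)$. Each clearing step amounts to solving a system of linear equations whose coefficients are entries of $f$; the nullhomotopy hypothesis ensures solvability of the topological version, and the Oka principle promotes the continuous solutions to holomorphic ones. The delicate point is keeping the total number of factors $K$ bounded uniformly over $X$, which forces one to work with a single $\Pi_K$ throughout rather than letting $K$ vary from point to point.
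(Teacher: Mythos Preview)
The paper does not prove this theorem at all: it is quoted verbatim from Ivarsson and Kutzschebauch \cite{IvarssonKutz2012} and used as a black box in the proof of Proposition~\ref{main}. There is therefore nothing in the present paper to compare your sketch against.

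That said, a few remarks on your outline relative to the actual argument in \cite{IvarssonKutz2012}. The two-step architecture you describe (first a continuous factorization, then an Oka-theoretic upgrade to holomorphic) is indeed the scheme they follow, and your final paragraph---the inductive reduction on $n$ by clearing a row via elementary column operations---is much closer to their real proof than the stratification picture you sketch in the preceding paragraph. In \cite{IvarssonKutz2012} the heart of the matter is not a global stratified-submersion structure of $\Pi_K$ but rather the holomorphic solution of the \emph{unimodular row problem}: given the last row of $f$, a holomorphic map $X\to\C^n$ whose entries generate the unit ideal, one must reduce it to $(0,\ldots,0,1)$ by right multiplication by holomorphic elementary matrices. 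This is where the Oka principle enters, applied to a carefully chosen fibration, and where the nullhomotopy hypothesis is consumed. Your remark that ``$SL_n(\C)$ is simply connected, so nullhomotopy implies being in the elementary subgroup'' is not the correct reason; over the ring $C(X)$ of continuous functions Vaserstein's theorem already gives $SL_n=E_n$ in suitable ranges without any homotopy input, but bounding the number of elementary factors uniformly and then lifting each step holomorphically is exactly what forces the nullhomotopy assumption. So your sketch is in the right neighbourhood, but the stratification route you propose is not the one taken, and filling in the details there would essentially require redoing the row-reduction argument anyway.
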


A reduced Stein space is called finite dimensional if its smooth part has finite dimension. All the results in the present paper hold for such spaces and the finite dimensional condition is necessary every time we apply \tref{vaser}. We shall see that the solution to the holomorphic Vaserstein problem is not necessary in the proof of \pref{onepoint}, hence this result holds for every reduced Stein space.

We now provide a sketch of proof of \pref{main}, assuming that $p=q=0$; we suggest the reader to locate its content in Section 2.
Given a holomorphic family of jets $P:X \to J^k_{0,0}(\C^n)$ at the origin $0 \in \C^n$, 
we begin by finding finitely many homogeneous polynomial maps $P_x^j\colon \C^n\to\C^n$, $j=1, \dots, k$, 
depending holomorphically on $x\in X$, 
such that $P_x (z)= P_x^1 (z) + \dots + P_x^k (z)$ holds for all $x\in X$ and $z\in \C^n$. 
We will then find holomorphic maps $S_j:X \to \Aut(\C^n)$ for $j=0, \dots, k$ satisfying 
\begin{enumerate} [(i)]
\item	$S_{j}^x \circ \dots \circ S_0^x (z)=P_x^1 (z) + \dots + P_x^j (z) + O(|z|^{j+1}) \text{ for } z \rightarrow 0$;
\item $S^x_j(z)=z+O(|z-a_i|^N)  \text{ for } z \rightarrow a_i$,
\end{enumerate}
for every $1 \leq j\leq k$ and each prescribed point $a_i, \ 1 \leq i \leq m$, where $N \in \N$ is an arbitrarily large integer.
Our interpolating map $F\colon X\to \Aut(\C^n)$ will then be the composition 
$F_x=S_{k}^x \circ \dots \circ S_0^x$ $(x\in X)$ of such automorphisms. For $j \geq 2$ the existence of $S_j$ will 
be a straightforward generalization of the construction in \cite{Forstneric1999}, while interpolating the linear part of the jet will not be as easy. 
We can identify the linearization of $P$ at  $0\in \C^n$ with a map $Q\colon X \to GL_n(\C)$. We will first take care of the determinant 
in order to reduce the problem to the case when $Q\colon X \rightarrow SL_n(\C)$; this step will already require the map 
$Q$ to be nullhomotopic.  In the next and crucial step we use Theorem \ref{vaser} to obtain  a decomposition of 
$Q:X \to SL_n(\C)$ into a  product of unipotent matrices. The terms of this 
decomposition provide us with automorphisms interpolating the linear part of the jet. 
The details of the proof can be found in Section \ref{sec:main}.

In order to prove the \emph{if} part of \tref{interpol}, we will use \pref{main} recursively in order to 
construct a sequence of families of automorphisms $\psi_j : X \to \Aut(\C^n)$, $j\in \N$ such that every finite composition $F^k_x := \psi_k^x \circ \dots \circ \psi_1^x$ interpolates the first $k$ jets at the first $k$ points of the given tame sequence $(a_j)_{j\in\N}\subset \C^n$. 
We will obtain the desired family of automorphisms as a locally uniform limit $F_x=\lim_{k\to\infty} F_x^k\in\Aut(\C^n)$
for $x\in X$. The \emph{only if} statement is easily obtained as follows.
Let $F:X \to \Aut(\C^n)$ be the interpolating map and $F^t$ the homotopy connecting $F=F^1$ to the identity map $F^0$. The differential with respect to the variable $z \in \C^n$ at each of the points $a_j$ is then a homotopy $dF_{a_j}^t:X\to GL_n(\C)$ connecting each linear part to the identity matrix. The details can be found in Section \ref{sec:main}.

After proving \pref{main} and \tref{interpol}, we will present a corollary of our main result and of the recent result
of Kutzschebauch and Ramos-Peon \cite{KutzRamosArXiv} concerning the possibility of interpolating a holomorphically moving family 
of points by a holomorphic family of automorphisms.  Combining their result with our \tref{interpol} for a finite number of points
yields a theorem on interpolating a holomorphically moving collection of points, as well as finite order 
jets at these points, by a holomorphic family of automorphisms; see Corollary \ref{cor:moving}.

In the last section we discuss possible generalizations of Theorem \ref{vaser} 
and their applications to jet interpolation, and we prove a parametric jet interpolation theorem for 
jets fixing a $k$ dimensional affine complex subspace of $\C^n$ for some $k<n$; see \tref{density}.

%
%
%
%

\section{Main results}
\label{sec:main}

We begin this section with definitions of a nondegenerate jet, of a tame set, and of a holomorphic family of automorphisms.

\begin{definition} \label{def:tame}
A discrete sequence of points $(a_j)_{j\in\N} \subset \C^n$ without repetition is {\em tame} if there exists a holomorphic automorphism 
$F \in \Aut(\C^n)$ such that 
\[	F(a_j)=(j,0,\dots,0) \ \ \text{for all}\ j=1,2,\ldots.
\]
The sequence $(a_j)_{j\in\N}$ is {\em very tame} if we can choose $F$ to be volume preserving.
\end{definition}

It is easily seen that this definition does not depend on the ordering of the points, so that the notion of a tame set is well defined. 
We refer the reader to Rosay and Rudin \cite{RosayRudin1988} for properties and results on tame sets.

\begin{definition}
Let $U \subset \C^n$ be a open neighborhood of a point $p\in \C^n$. Given a holomorphic map $F:U \rightarrow \C^n$ we denote by $[F]_p^k$ its $k$-jet at the point $p$, that is, its Taylor polynomial of order $k$ at $p$. We say that the jet is nondegenerate if its linear part has nonzero determinant. The set of all nondegenerate $k$-jets at a point $p \in \C^n$ will be denoted by $J^k_{p,\ast} (\C^n)$. For $q \in \C^n$, we will denote by $J^k_{p,q} (\C^n)$ the set of all $P\in J^k_{p,\ast} (\C^n)$ such that $P(p)=q$.
\end{definition}

We observe that $J^k_{p,\ast} (\C^n)$ can be identified with an open set in a complex Euclidean space,
and any element of it can be identified with a polynomial map $\C^n\to\C^n$ of degree at most $k$ whose linear part is 
nondegenerate.

\begin{definition}\label{def:holomap}
Let $X$ be a complex manifold. A map $F:X \to \Aut(\C^n)$ is holomorphic if the evaluation map 
$F(x)(z)=:F_x(z)$ is holomorphic in the usual sense as a map from $X \times \C^n$ into $\C^n$.
\end{definition}

The main condition of \tref{interpol} is topological, namely we ask all the relevant maps to be nullhomotopic.

\begin{definition} \label{def:nullhomotopic}
Let $A,B$ be any two topological spaces. A continuous map $f:A \to B$ is said to be nullhomotopic if there exists a continuous homotopy $F:[0,1] \times A \to B$ such that $F(1,\cdot)=:F^1=f$ and $F^0$ is a constant map.
\end{definition}

Since for $p\in \C^n$ both $J^k_{p,p}(\C^n)$ and $\Aut(\C^n)$ are path connected,  we will always assume that a nullhomotopic map is homotopic to the jet of the identity map or to the identity map respectively.

The proof of \tref{interpol} is based on the following technical result.
For the nonparametric case, see \cite[Proposition 2.1]{Forstneric1999}.

%
%
%
\begin{proposition} \label{main}
Let $X$ be a Stein manifold, $n>1$ and $k\in\N$ be integers, $p,q \in \C^n$, and $P:X \to J^k_{p,q}(\C^n)$ be 
a holomorphic family of jets at $p$ with $P(p)=q$. 
Let $Q_x$ be the linear part of $P_x$ at $p$, i.e. 
$P_x(z)=q + Q_x (z-p) + O(|z-p|^2)$ as $z\to p$ for every $x\in X$, and assume that the map 
$Q: X \to GL_n (\C)$ is nullhomotopic. Given finitely many points $\{a_i \}_{i=1}^{i_0} \subset \C^n \setminus \{ p, q \}$, 
an integer $N \in \N$, a compact set $T \subset X$, a compact convex set $K \subset \C^n$ 
such that $p,q \notin K$, and a number $\epsilon>0$,
there exists a holomorphic map $F:X \to \Aut(\C^n)$ satisfying the following conditions: 
\begin{itemize}
\item[\rm (i)] $F_x(z)=P_x(z)+O(|z-p|^{k+1})$ for $z \to p$ and for every $x \in X$.
\vspace{1mm} 
\item[\rm (ii)] $F_x(z)=z+O(|z-a_i|^N )$ for $z \to a_i$, $1\leq i \leq {i_0}$ and for every $x \in X$.
\vspace{1mm} 
\item[\rm (iii)] $|F_x(z)-z|<\epsilon$ for every $x\in T$ and $z\in K$.
\vspace{1mm} 
\item[\rm (iv)]  If $JP_x(z)=1+O(|z-p|^{k+1})$ holds for every $x \in X$, then we can take $F_x\in\Aut(\C^n)$ to be volume preserving for all $x \in X$.
\vspace{1mm} 
\item[\rm (v)] 

If $\{c_j\}_{j\in \N} \subset \C^n (K \cup \setminus \{p,q\})$ is a discrete set contained in the 
$z_1$-axis, we can also ensure that 

$F_x(c_j)=c_j$ for every $x \in X$ and $j \in \N$.

\end{itemize}
\end{proposition}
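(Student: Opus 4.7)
The plan is to implement the strategy outlined in the introduction. After pre- and post-composing with translations we may assume $p=q=0$. Expand the jet map into a sum of holomorphic families of homogeneous polynomial maps $P_x(z)=P_x^1(z)+\cdots+P_x^k(z)$, with $P_x^j$ homogeneous of degree $j$ in $z$ and holomorphic in $x$, and aim to construct $S_0^x,\ldots,S_k^x\in\Aut(\C^n)$ such that the composition $F_x=S_k^x\circ\cdots\circ S_0^x$ realizes $P_x^1+\cdots+P_x^j$ as its $j$-jet at the origin, while each $S_j$ is the identity to order $N$ at every $a_i$, fixes every $c_j$ from condition (v), and is uniformly close to the identity on $T\times K$.

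For $j\geq 2$, I would mimic the nonparametric construction of \cite[Proposition 2.1]{Forstneric1999}: after a generic linear change of coordinates, write $P_x^j$ as a finite sum of terms $h_\ell^x(\hat z_\ell)\,e_\ell$ with $h_\ell^x$ a homogeneous polynomial of degree $j$ in $\hat z_\ell$ holomorphic in $x$ (here $\hat z_\ell$ denotes the coordinates other than $z_\ell$), and realize each term as the $j$-jet of a shear automorphism $z\mapsto z+h_\ell^x(\hat z_\ell)\,\chi_\ell(\hat z_\ell)\,e_\ell$. The cutoff $\chi_\ell$ is a fixed entire function on $\C^{n-1}$ equal to $1+O(|\hat z_\ell|^{j+1})$ at the origin, vanishing to order $N$ at each projection $\hat a_i$, vanishing at each $\hat c_j$, and small on the projection of $K$; such a $\chi_\ell$ is produced by a Weierstrass--Hermite construction together with a Runge-type approximation argument on $\C^{n-1}$. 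Since $j\geq 2$, each such $S_j$ is tangent to the identity to second order at $0$ and so does not alter the linear part.

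The essential step is the construction of $S_0$ realizing $Q_x\in GL_n(\C)$ as its $1$-jet at $0$, and it is here that the nullhomotopy hypothesis and \tref{vaser} are needed. Since $\det Q\colon X\to\C^*$ is nullhomotopic and $X$ is Stein, it admits a holomorphic logarithm $\phi$, so that $Q'_x:=\mathrm{diag}(e^{-\phi_x},1,\ldots,1)\,Q_x$ takes values in $SL_n(\C)$ and is again nullhomotopic. \tref{vaser} then supplies a factorization of $Q'$ as a finite product of upper- and lower-triangular unipotent matrix functions of $x$; each such factor is a composition of elementary unipotents $I+c^x_{ij}E_{ij}$ with $i\neq j$, which is the linearization at $0$ of the affine shear $z\mapsto z+c^x_{ij}z_j\,e_i$. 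The remaining diagonal factor is realized as the linear part of the overshear $(z_1,\ldots,z_n)\mapsto(e^{\psi_x(\hat z_1)}z_1,z_2,\ldots,z_n)$ for a suitable entire family $\psi_x$ with $\psi_x(0)=\phi_x$. Multiplying the linear displacement of each shear (or the exponent $\psi$) by a cutoff of the type used in the previous paragraph, but now with $\chi=1+O(|z|^2)$ at $0$, produces $S_0$ with the desired $1$-jet, interpolation, and smallness properties.

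The volume-preserving statement (iv) follows because $JP_x\equiv 1+O(|z|^{k+1})$ forces $\det Q_x\equiv 1$, so the diagonal factor is absent and every shear used in the construction is automatically volume preserving. The hardest part throughout is the linear step: without the Vaserstein decomposition of \tref{vaser} there is no obvious way to pass from a holomorphic family of elements of $GL_n(\C)$ to a product of elementary shear-type automorphisms, and the nullhomotopy hypothesis is precisely the condition needed to invoke that tool.
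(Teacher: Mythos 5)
Your overall strategy matches the paper's --- reduce to $p=q=0$, build $F_x$ as a composition of shears and overshears whose $j$-th factor corrects the degree-$j$ part of the jet, and use the nullhomotopy hypothesis together with \tref{vaser} to decompose the linear part into elementary unipotent factors. The treatment of the linear step (holomorphic logarithm of $\det Q$, reduction to $SL_n(\C)$, Vaserstein factorization, realization of each elementary unipotent as the $1$-jet of a shear, and of the diagonal factor as an overshear) is exactly what the paper does.

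There is, however, a genuine gap in your treatment of the degrees $j\geq 2$. You propose to write the homogeneous part $P_x^j$ as a finite sum of terms $h_\ell^x(\hat z_\ell)\,e_\ell$ with $h_\ell^x$ independent of $z_\ell$, and then realize each term by a shear. But every vector field of the form $h(\hat z_\ell)\,\partial/\partial z_\ell$ has zero divergence, and divergence is preserved by linear changes of coordinates; so no ``generic linear change of coordinates'' can decompose a homogeneous vector field with nonvanishing divergence into a sum of such shear fields. The decomposition you state is simply impossible in general. What the paper actually uses (following Kutzschebauch--Lodin, Lemma A.6) is a basis of the space of homogeneous degree-$r$ vector fields built from two kinds of terms: shear-type fields $z\mapsto(\lambda_j(z))^r v_j$ with $\lambda_j(v_j)=0$, \emph{and} overshear-type fields $z\mapsto(\mu_j(z))^{r-1}\langle z,w_j\rangle w_j$ with $\mu_j(w_j)=0$; the latter carry the divergence, and they are realized by overshears $z\mapsto z+(e^{h_x(\mu_j(z))}-1)\langle z,w_j\rangle w_j$. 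Your volume-preserving remark in (iv) --- ``every shear used is automatically volume preserving'' --- is thus vacuously true only because your decomposition forgot the non-shear terms; in the paper, (iv) follows because in the divergence-free case the coefficients of the overshear terms vanish identically.

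A secondary point: you take your cutoffs $\chi_\ell$ to be functions of the coordinate projection $\hat z_\ell$. For $\ell=1$, the projection of every $c_j$ on the $z_1$-axis is $0$, so requiring $\chi_1(0)=1$ (to preserve the jet at the origin) conflicts with requiring $\chi_1(\hat c_j)=0$ (to fix the $c_j$). The paper avoids this by composing with a single linear form $\lambda\in(\C^n)^*$ whose kernel is chosen almost orthogonal to the $z_1$-axis, so that $\lambda(0)=0$ while the $\lambda(c_j)$ are distinct and nonzero; the needed one-variable entire function is then supplied by \lref{lem:exist}. Also, your reduction to $p=q=0$ by translations does not by itself keep the $c_j$ on the $z_1$-axis; the paper instead invokes Buzzard--Forstneri\v c to move $p$ to $q$ and position everything appropriately on the $z_1$-axis.
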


Note that \tref{interpol} for a finite number of points $a_1,\ldots, a_{i_0}$ and $b_1,\ldots, b_{i_0}$
follows directly from this proposition by considering a composition of automorphisms each interpolating a single jet and fixing 
all the other points up to order $N>\max_{i=1,\ldots,i_0}{m_i}$. We will pay particular attention to the case $i_0=1$
in order to prove \pref{onepoint} together with \pref{main}.

The importance of the approximation condition (iii) is clearly seen in \cite{BuzzardForstneric2000} where the authors used
\cite[Proposition 2.1]{Forstneric1999} (the nonparametric version of  \pref{main}) multiple times, thereby constructing a sequence of automorphisms which are closer and closer to the identity map on larger and larger compact sets in $\C^n$. 
They considered a composition of such a sequence and were able to use the convergence result \cite[Proposition 5.1]{Forstneric1999} 
precisely because of the estimate provided by approximating the identity on a compact set. We shall make a similar use of it
in this paper.

In the proof of \pref{main}, we need an existence result for holomorphic functions from $X \times \C$ to $\C$. We state it separately afterwards as it is an elementary but rather technical tool that would divert the reader's attention if exposed within the proof of \pref{main}.

\begin{proof}[Proof of \pref{main} and \pref{onepoint}]
We would like to assume that $p=q=0$.
An volume preserving automorphism moving the point $p$ to the point $q$ while satisfying (ii),(iii) and (v) is provided by \cite[Theorem 1.2]{BuzzardForstneric2000}.
Using the same result we can assume that the point $p=q$ is in the $z_1$-axis and positioned in such a way that the projection of $K$ on this axis is far from $p$. Without loss of generality we can assume that $p=0$.

We shall inductively construct automorphisms $S_j^x:\C^n \to \C^n$ for $j=0,1, \ldots k$, depending holomorphically 
on $x \in X$ and satisfying the following properties for every $r \in \{1, \dots k\}$:
\begin{enumerate}
\item[\rm (a$_r$)] $P_x\circ (S_0^x)^{-1} \circ (S_1^x)^{-1} \circ \dots \circ (S_{r}^x)^{-1}  = z+ 
O(|z|^{r})$ for $z \rightarrow 0$;
\vspace{1mm}
\item[\rm (b$_r$)] $S_{r}^x \circ \dots \circ S_0^x (z) =z+O(|z-a_i|^N)$ for $z \to a_i$, $1\leq i \leq {i_0}$.
\end{enumerate}
together with (iii) for $\frac{\varepsilon}{k+1}$ and (v).
Furthermore, all maps $S_j^x\in\Aut(\C^n)$ for $x\in X$ and $j=0,1,\ldots,r$ will be volume preserving
if condition (iv) holds. Taking 
\[
	F_x(z):= S_{k}^x \circ \dots \circ S_0^x (z),\quad  x\in X
\]
will furnish a holomorphic map $F\colon X\to\Aut(\C^n)$ satisfying the above conditions.

Let $\{\lambda_j\}_{j=1}^n$ be a basis of the dual space $(\C^n)^*$ such that $\lambda_j(a_i) \neq 0,\ 1\leq i \leq {i_0}$, 
$1\leq j \leq n$. We further require that their kernels are almost orthogonal to the $z_1$-axis. A basis with these properties exists as being a basis is a generic condition and our requirements determine an open subset of  $(\C^n)^*$. We observe that this choice implies that the image of $K$ under any of these maps is disjoint from $0$ and the image of $\{c_j\}_{j\in \N}$ is still a discrete sequence.
Let $\{ e_j \}_{j=1}^n$ be its dual basis of $\C^n$ such that $|e_2|=1$.
We shall also write $\langle z,w\rangle =\sum_{j=1}^n z_j\overline w_j$. 

We will first determine the maps $S_0^x$ and $S_1^x$. Observe that if we look at the linear part of the jet we have 
\[
	P_x(z)=P_x^1(z) + O(|z|^2)=Q_x(z) +O(|z|^2),\quad z\to 0;
\] 
hence we require that $S_1^x \circ S_0^x (z) = Q_x(z) +O(|z|^2)$ as $z\to 0$. 

Since the map $Q\colon X\to GL_n(\C)$ is nullhomotopic, so is the determinant map $\det Q\colon X\to\C\setminus\{0\}$. Hence by the homotopy lifting property there exists a holomorphic function $g:X \to \C$ such that $e^{g(x)}=\det(Q_x)$ holds for all $x \in X$. Assume there exists $f \in \Olo(X \times \C)$ such that
\begin{enumerate}
\item $|f|_{T\times \lambda_1(K)}<\frac{\varepsilon}{k+1}$;
\item $f_x(\zeta)=g(x)+ O(|\zeta|)$ for $\zeta \to 0$;
\item $f_x(\zeta)=O(|z-\lambda_1(a_j)|^N)$ for $\zeta \to a_i$, $i=1,\dots,i_0$;
\item $f_x(\lambda_1(c_j))=0$, $j>0$;
\end{enumerate}
(we postpone the proof of the existence of such a function [\lref{lem:exist}])
and consider the holomorphic family of overshears
\[
S_0^x(z)=z+(e^{f_x(\lambda_1 (z))}-1)\langle z,e_2 \rangle e_2=(z_1,z_2 e^{f_x(\lambda_1 (z))}, z_3 \dots, z_n), \quad  x \in X.
\]
It is easily seen that its Jacobian determinant satisfies
\[
	J(S_0^x) (z)= e^{f_x(\lambda_1 (z))} = e^{g(x)} +O(|z|)\ \text{for}  \ z \rightarrow 0
\]
as the Jacobian matrix is triangular. Since we also have that
\[
S_0^x(z) = z + O(|z-a_i|^N) \text{ for } z \rightarrow a_i, \ 1\leq i \leq {i_0},
\]
we see that $S_0^x$ satisfies condition (b$_0$) above and that the linear part of $Q_x\circ (S_0^x)^{-1}$ at $0\in\C^n$ 
belongs to $SL_n (\C)$ for all $x \in X$. 
Furthermore, since the linear part of the map $X\ni x \mapsto S_0^x$ is nullhomotopic, so is the linear part of $x \mapsto Q_x \circ (S_0^x)^{-1}$.
Conditions (1) and (4) provide (iii) and (v).
(When $Q_x \in SL_n(\C)$ for all $x \in X$, we may simply take $S_0^x=\Id$.)

We can now apply \tref{vaser} (the holomorphic Vaserstein problem) in order to find an integer $M \in \N$ and holomorphic maps 
$G_1, \dots, G_M \colon X \rightarrow \C^{\frac{n(n-1)}{2}}$ such that the linear part of  $Q_x \circ (S_0^x)^{-1}$ at $0\in\C^n$ 
equals the product of lower and upper triangular matrices
\begin{equation}\label{eq:decomposition} \tag{$\star$}
\left( \begin{array}{ccc}
1 & 0 \\
G_1(x) & 1 \end{array} \right)
\left( \begin{array}{ccc}
1 & G_2(x) \\
0 & 1 \end{array} \right) 
\dots
\left( \begin{array}{ccc}
1 & G_M(x) \\
0 & 1 \end{array} \right).
\end{equation}
Each of these triangular matrices can be further decomposed into a product of elementary matrices $T_{j,l}=I+\alpha_x e_{j,l}, \ j\neq l$, 
where $\alpha_x\in\C$ and $e_{j,l}$ is the matrix with $1$ in position $(j,l)$ and zero elsewhere. The function $\alpha \colon X \to \C$ is 
holomorphic since it is the term in position $(j,l)$. For each element of this decomposition, consider the holomorphic family of 
shear automorphisms
\[
L_x(z)=z+h_x(\lambda_j(z))e_l, \quad  x \in X,\ z\in \C^n
\]
where $h:X \times \C \to \C$ is such that
\begin{enumerate}
\item $|h|_{T\times \lambda_j(K)}<\frac{\varepsilon}{(k+1)*M*n^2}$;
\item $h_x(\zeta)=\alpha_x\zeta + O(|\zeta|^2)$ for $\zeta \to 0$;
\item $h_x(\zeta)=O(|z-\lambda_j(a_j)|^N)$ for $\zeta \to a_j$, $i=1,\dots,i_0$;
\item $h_x(\lambda_j(c_i))=0$, $i>0$.
\end{enumerate}
(again, the existence will be discussed later).
Observe that
\[
L_x(z)=(I+\alpha_x e_{j,l})z + O(|z|^2)\ \text{for}  \ z \rightarrow 0
\]
and
\[
L_x(z)=z+O(|z-a_i|^N) \text{ for } z \rightarrow a_i, \ 1\leq i \leq {i_0}.
\]
Hence, the composition $S_1^x$ of these families of shears, as they appear in the corresponding matrix decomposition
\eqref{eq:decomposition}, is a volume preserving automorphism of $\C^n$ depending holomorphically on $x \in X$ and satisfying 
conditions (a$_1$), (b$_1$), (iii) and (v). 

When we are fixing only the point $0\in\C^n$  as in \pref{onepoint}, we just take 
$S_0^x=\Id$ and $S_1^x  = Q_x$; clearly this does not require any topological condition on $Q$.

This concludes the construction of the maps $S_0,S_1\colon X\to\Aut(\C^n)$.

In order to find maps $S_2,\ldots,S_k \colon X\to \Aut(\C^n)$ we proceed inductively. 
Assume that for some integer  $r\in \{2,\ldots,k\}$ we have already found maps $S_0,S_1,\ldots,S_{r-1}$
such that conditions (a$_{r-1}$), (b$_{r-1}$), (iii), (iv) and (v) hold. Then 
\[
	P_x\circ (S_0^x)^{-1} \circ (S_1^x)^{-1} \circ \dots \circ (S_{r-1}^x)^{-1}(z)  =z+ P_x^r (z)+O(|z|^{r+1}) \ \text{for} \ z \rightarrow 0,
\]
where $P_x^r$ is a homogeneous polynomial vector field of order $r$ on $\C^n$ depending holomorphically on $x\in X$.

We now use
\cite[Lemma A.6]{KutzLodin2013} in order to obtain numbers $A,B \in \N$, linear maps $\{ \lambda_j\}_{j=1}^A, \ \{ \mu_j\}_{j=1}^B\subset (\C^n)^\ast$ 
and vectors $\{ v_j\}_{j=1}^A, \ \{ w_j\}_{j=1}^B\subset \C^n$ with $\lambda_j(v_j)=0$ and $\mu_j(w_j)=0$ for all $j$
such that the homogeneous polynomial maps of degree $r$ given by
\[
z \mapsto (\lambda_j(z))^r v_j, \quad j=1, \dots A,
\]
together with
\[
z \mapsto (\mu_j(z))^{r-1} \langle z, w_j \rangle w_j, \quad j=1, \dots B
\]
form a basis for the vector space of homogeneous polynomial vector fields of degree $r$ on $\C^n$. Hence we can write
\[
	P_x^r(z)=\sum_{j=1}^A c_x^j (\lambda_j(z))^r v_j + \sum_{j=1}^B d_x^j (\mu_j(z))^{r-1} \langle z,w_j \rangle w_j, 
	\quad x\in X,\  z\in \C^n
\]
for uniquely determined holomorphic functions $c^j, d^j\colon X \to \C$. 
Recall that the coefficients $d^j$ are identically zero if the vector field $P_x^r$ has vanishing divergence.
Thanks to \cite[Lemma A.5]{KutzLodin2013} we can furthermore ensure that $\mu_j(a_i) \neq 0$ 
and $\lambda_j(a_i) \neq 0$ for all $i=1,\ldots,i_0$ and all $j$ in the appropriate range and the kernel of each of these linear forms is almost orthogonal to the $z_1$-axis.
For each term in the above decomposition we consider the following  families of shears
depending holomorphically on $x\in X$:
\[
	L_x(z)=z+f_x(\lambda_j(z))v_j, \quad z\in \C^n
\]
where $f$ satisfies 
\begin{enumerate}
\item $|f|_{T\times \lambda_j(K)}<\frac{\varepsilon}{(k+1)*A*B}$;
\item $f_x(\zeta)=c^j_x\zeta^r + O(|\zeta|^{r+1})$ for $\zeta \to 0$;
\item $f_x(\zeta)=O(|z-a_j|^N)$ for $\zeta \to a_j$, $j=1,\dots,i_0$;
\item $f_x(c_j)=0$, $j>0$;
\end{enumerate}

and
\[
R_x(z)=z+(e^{h_x(\mu_j(z))}-1)\langle z,w_j\rangle w_j, \quad  z\in \C^n
\]
where $h$ satisfies
\begin{enumerate}
\item $|h|_{T\times \lambda_j(K)}<\frac{\varepsilon}{(k+1)*A*B}$;
\item $h_x(\zeta)=c^j_x\zeta^{r-1} + O(|\zeta|^r)$ for $\zeta \to 0$;
\item $h_x(\zeta)=O(|z-a_j|^N)$ for $\zeta \to a_j$, $j=1,\dots,i_0$;
\item $h_x(c_j)=0$, $j>0$.
\end{enumerate}

We define $S_r^x$ to be the composition of all mentioned $L_x$ and $R_x$, the order not being relevant. 
Examining the behaviour of each $L_x$ and $R_x$ near $0$ and $a_1,\ldots, a_{i_0}$ like it was done for $S_0^x$ and $S_1^x$ 
then proves that $S_r^x$ satisfies conditions (a$_r$) and (b$_r$). As (iii), (iv) and (v) are clearly satisfied as well, this closes the induction step. 

After finitely many steps we find maps $S_j^x$ for $j=0,\ldots,k$
such that conditions (a$_k$) and (b$_k$) hold.  Taking $F_x(z):= S_{k}^x \circ \dots \circ S_0^x (z)$ for $x\in X$ furnishes a holomorphic 
map $F\colon X\to\Aut(\C^n)$ satisfying the required conditions.
\end{proof}

Before completing the proof with \lref{lem:exist}, we show how to use \pref{main} to obtain the main result.

\begin{proof}[Proof of \tref{interpol}]
The proof amounts to a recursive application of \pref{main} and is similar to the one given for the nonparametric case
in the paper \cite{BuzzardForstneric2000} by Buzzard and Forstneri\v c. 
The main difference in the induction step is that here we need to pay particular attention to what 
happens to the subsequent points of the tame sequence.

As both sequences $a_j$ and $b_j$ are tame, we can change the families of jets and assume that $a_j=b_j=(j,0,\dots,0)=:je_1, \ j \in \N$. Furthermore, as shown in \cite{BuzzardForstneric2000}, we only need to prove this result at a discrete sequence $\{c_j\}_{j \in \N}$
of points contained in the $z_1$-axis, as for any such sequence there exists an automorphism $\Phi$ of $\C^n$ such that
\[
\Phi(z)=je_1 + (z-c_j)+O(|z-c_j|^{m_j+1}), \ \ z\to c_j,\ j \in \N.
\]

Fix an exhausting sequence of compacts $T_1\subset T_2\subset\cdots\subset \cup_{j=1}^\infty T_j = X$  
and a sequence of positive numbers $\{\varepsilon_j\}_{j \in \N} \subset \R_{>0}$ such that 
$\sum_{j=1}^{\infty} \varepsilon_j < +\infty$. We will inductively construct  the following:
\begin{enumerate}[(a)]
\item a discrete sequence of points $\{\alpha_j\}_{j \in \N} \subset \N \subset \C$,
\vspace{1mm}
\item an exhausting sequence of convex compacts $K_1\subset K_2\subset\cdots\subset \cup_{j=1}^\infty K_j = \C^n$ 
such that  $\dist(K_{j-1}, \C^n \setminus K_j)>\varepsilon_j$ and $\alpha_{j}e_1 \notin K_j$ for all $j>1$, and
\vspace{1mm}
\item a sequence of holomorphic maps $\psi_j : X \to \Aut(\C^n)$ for $j\in\N$,
\end{enumerate}
such that for $F^k_x := \psi_k^x \circ \dots \circ \psi_1^x\in\Aut(\C^n)$ $(x\in X,\ k\in\N)$ we have that
\begin{enumerate}
\item[(i$_k$)] $F_x^k(z)=P^j_x(z)+O(|z-\alpha_j e_1|^{m_j+1})$ for $z \rightarrow \alpha_j e_1$ and each $j=1,\ldots,k$;
\vspace{1mm}
\item[(ii$_k$)] $F_x^k(i e_1)=i e_1$ for every $i > \alpha_k$;
\vspace{1mm}
\item[(iii$_k$)] $\psi_j^x$ is $\varepsilon_j$-close to the identity on $K_j$ for every $x \in T_j$ and $ j \in \N$.
\end{enumerate}

For the base of our induction, let $K_1=\B$, the ball of radius one in $\C^n$, and $\alpha_1=2$. By \pref{main}
we can pick a family of automorphisms $\psi_1^x\in\Aut(\C^n)$ $(x\in X)$ such that properties (i$_1$), (ii$_1$) and (iii$_1$) hold.

For the induction step, suppose we have constructed the objects in (a), (b) and (c) satisfying properties (i$_j$), (ii$_j$) and (iii$_j$) 
for all $j=1,\ldots,k$. Pick a compact convex set $K_{k+1} \subset \C^n$ such that 
\[
	(\alpha_k+1)\B \cup F^{k}_x ((\alpha_k+1)\B) \subset K_{k+1}\ \ \text{for every $x \in T_{k+1}$}
\] 
and 
\[
	\dist(K_k, \C^n \setminus K_{k+1})>\varepsilon_{k+1}.
\] 
Choose $\alpha_{k+1} \in \N$ such that $\alpha_{k+1} e_1 \notin K_{k+1}$. 
We again invoke \pref{main} to obtain a holomorphic map $\psi_{k+1}:X \to \Aut(\C^n)$ with the following properties:
\begin{enumerate}
\item $\psi_{k+1}^x (z) = z + O(|z-\alpha_j e_1|^N)$ as $z\to \alpha_j e_1$ for every $j=1,\ldots, k$, where the integer $N>m_j$ for every $j<k+1$;
\vspace{1mm}
\item $\psi_{k+1}^x (z) = [P^{k+1}_x \circ (F^x_k)^{-1}]_{\alpha_{k+1}e_1}^{m_{k+1}} + O(|z-\alpha_{k+1} e_1|^{m_{k+1}+1})$
as $z\to \alpha_{k+1}e_1$;
\vspace{1mm}
\item $\psi_{k+1}^x$ is $\varepsilon_{k+1}$-close to the identity on $K_{k+1}$ for every $x \in T_{k+1}$;
\vspace{1mm}
\item $\psi_{k+1}^x (j e_1) = j e_1$ for every $j>\alpha_{k+1}$.
\end{enumerate}
We then see that the holomorphic family of automorphisms defined by
\[
	F^{k+1}_x = \psi_{k+1}^x \circ F_x^k\in\Aut(\C^n),\quad x\in X
\] 
satisfies properties (i$_{k+1}$), (ii$_{k+1}$) and (iii$_{k+1}$), so the induction may proceed.

The sequence of  compacts $K_j\subset \C^n, \ j \in \N$ constructed in this way clearly satisfies condition (b).
According to \cite[Lemma 4.1]{KutzRamosArXiv}, the sequence $\{F^k\}_{k\in\N}$ 
converges to a holomorphic family of automorphisms $F: X \to \Aut(\C^n)$ which interpolates the given 
families of jets $P^j_x$ at the points $\alpha_j e_1$ $(j\in \N)$  thanks to property (i).
\end{proof}

We now provide the technical lemma needed in the proof of $\pref{main}$.

\begin{lemma} \label{lem:exist}
Let $T\subset X$ be a compact set and $K \subset \C$ a convex compact set such that $0 \notin K$. Let  $\{a_i \}_{i=1}^{i_0} \subset \C \setminus \{0\}$ and  $\{c_j\}_{j\in \N} \subset \C\setminus \{0\}$ be a discrete sequence. Given $\beta \in \Olo(X)$, $\varepsilon>0$ and $r,N\in \N$, there exists a holomorphic $f:X \times \C \to \C$ such that
\begin{enumerate}
\item $|f|_{T\times K}<\varepsilon$;
\item $f_x(\zeta)=\beta(x)\zeta^r + O(|\zeta|^{r+1})$ for $\zeta \to 0$;
\item $f_x(\zeta)=O(|z-a_j|^N)$ for $\zeta \to a_j$, $j=1,\dots,i_0$;
\item $f_x(c_j)=0$, $j>0$;
\end{enumerate}
\end{lemma}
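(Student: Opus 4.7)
The plan is to exhibit $f(x,\zeta)$ as a product of three decoupled factors, $f(x,\zeta) = \beta(x)\,E(\zeta)\,h(\zeta)$, where each factor is responsible for one of the requirements: $\beta(x)$ contributes the prescribed parameter dependence, $E(\zeta)$ is an entire function on $\C$ encoding all of the vanishing conditions at $0$, at $a_1,\ldots,a_{i_0}$, and at the $c_j$, while $h(\zeta)$ is an entire function that is normalized at $0$ and very small on $K$, so that multiplying by it shrinks the product on $T\times K$ without destroying the local behavior at $0$ or at any $a_i$. Because the $x$-dependence is concentrated in the scalar factor $\beta(x)$, no Cartan--theoretic machinery on $X\times\C$ is needed.

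First, I would construct $E$ via the Weierstrass product theorem: since $\{c_j\}_{j\in\N}$ is discrete in $\C$ and avoids $0$, there is an entire function $W\in\Olo(\C)$ whose zero set equals $\{c_j\}$ with simple zeros and no other zeros, and $W(0)\neq 0$. Set
\[
E(\zeta) \;:=\; \frac{\zeta^{r}\,W(\zeta)}{W(0)}\prod_{i=1}^{i_0}\left(1-\frac{\zeta}{a_i}\right)^{\!N}.
\]
Then $E$ is entire, $E(\zeta)=\zeta^{r}+O(|\zeta|^{r+1})$ as $\zeta\to 0$, $E(\zeta)=O(|\zeta-a_i|^{N})$ near each $a_i$, and $E(c_j)=0$ for all $j$. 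Thus $\beta(x)E(\zeta)$ already fulfills (2), (3) and (4); it only may fail (1).

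To control the $T\times K$ norm, I would then use convexity: since $K\subset\C$ is compact and convex with $0\notin K$, Hahn--Banach separation furnishes $\lambda\in\C^*$ and $d>0$ such that $\mathrm{Re}(\lambda\zeta)\geq d$ for every $\zeta\in K$. For a positive integer $m$ the entire function
\[
h_m(\zeta)\;:=\;e^{-m\lambda\zeta}
\]
satisfies $h_m(0)=1$ and $|h_m(\zeta)|\leq e^{-md}$ on $K$. Since $h_m(0)=1$, multiplying $\beta(x)E(\zeta)$ by $h_m(\zeta)$ preserves the leading term $\beta(x)\zeta^{r}$ near $0$; and multiplication by any holomorphic function in $\zeta$ preserves the vanishing at each $a_i$ (to order $N$) and at each $c_j$. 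Hence $f(x,\zeta):=\beta(x)\,E(\zeta)\,h_m(\zeta)$ satisfies (2), (3) and (4), and choosing $m$ so large that $|\beta|_{T}\cdot\sup_{K}|E|\cdot e^{-md}<\varepsilon$ secures (1).

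There is no serious obstacle in this argument: the only nontrivial inputs are the convergence of the Weierstrass product, which is standard from the discreteness of $\{c_j\}$, and the Hahn--Banach separation of $0$ from the convex compact set $K$, which yields the exponential damping factor. All other conditions are verified by direct inspection of the factorization $\beta(x)E(\zeta)h_m(\zeta)$.
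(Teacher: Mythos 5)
Your proposal is correct and follows essentially the same route as the paper: factor $f(x,\zeta)=\beta(x)\cdot(\text{entire function of }\zeta)$, build the entire factor from a Weierstrass product carrying the prescribed zeros, and multiply by a normalizing function equal to $1$ at the origin and uniformly small on $K$. Your use of a Hahn--Banach separating functional to produce the explicit damping factor $e^{-m\lambda\zeta}$ makes precise what the paper merely asserts (``since $K$ is convex there is a holomorphic $g$ \ldots''), so yours is a slightly more self-contained rendering of the same argument.
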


\begin{proof}
By Weierstrass factorization theorem there exists a holomorphic $h:\C \to \C$ which is zero exactly at the points $c_j$ and $a_j$, we can further require that it is zero up to order $N$ at the points $a_j$.

Since $K$ is convex there is a holomorphic $g:\C \to \C$ such that $g(0)=\frac{1}{h(0)}$ and 
\[
|g|_K < \frac{\varepsilon}{|\beta|_T |h|_K |\zeta^r|_K}.
\]

Then $f_x(\zeta):=\beta(x) \zeta^r h(\zeta)g(\zeta)$ is the desired function.
\end{proof}

As mentioned in the introduction, we now combine our \tref{interpol} with the following theorem from \cite{KutzRamosArXiv} 
to obtain jet interpolation by automorphisms at moving sets of points in $\C^n$.

\begin{theorem}[Kutzschebauch and Ramos-Peon \cite{KutzRamosArXiv}] \label{points}
Let $X$ be a Stein manifold, $N\in\N$, and $p_1,\ldots,p_N$ be distinct points in $\C^n$ for some $n>1$. 
Consider the space $(\C^n)^N$ with coordinates 
$(z_1, \dots, z_N)$, where $z_j \in \C^n$ for $j=1,\ldots, N$. Given a nullhomotopic holomorphic map 
\[
	\alpha=(\alpha_1,\ldots,\alpha_N) \colon X \to (\C^n)^N \setminus \bigcup_{i \neq j} \{z_i=z_j\},
\] 
there exists a nullhomotopic holomorphic map $F\colon X\to \Aut(\C^n)$ satisfying 
\[
	F_x(p_j) = \alpha_j(x) \text{ for all } x \in X \text{ and } j=1,\ldots,N.
\]
\end{theorem}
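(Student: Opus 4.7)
The plan is to realize $F$ as a holomorphic lift of $\alpha$ through the evaluation map
\[
	\mathrm{ev} \colon \Aut(\C^n) \to (\C^n)^N \setminus \bigcup_{i\neq j}\{z_i = z_j\}, \qquad F \mapsto (F(p_1),\ldots,F(p_N)),
\]
by combining a topological lifting argument with a parametric Oka principle. The nullhomotopy hypothesis on $\alpha$ will provide both the input needed for the continuous lift and, ultimately, the nullhomotopy of $F$ itself.

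The first step is a normalization: by the classical results of Rosay--Rudin and Buzzard--Forstneri\v c, $\Aut(\C^n)$ acts transitively on ordered $N$-tuples of distinct points, so fixing a base point $x_0 \in X$ and $\Phi \in \Aut(\C^n)$ with $\Phi(p_j) = \alpha_j(x_0)$ and replacing $\alpha$ by $\mathrm{ev}(\Phi)^{-1}\circ \alpha$, one may assume $\alpha(x_0) = (p_1,\ldots,p_N)$. I would then construct a continuous lift $F^c \colon X \to \Aut(\C^n)$ of $\alpha$ with $F^c(x_0) = \Id$ by showing that $\mathrm{ev}$ is a Serre fibration: near any configuration $q$ in its image, an explicit parametrized composition of small shears moving $(p_j)$ into a neighborhood of $q$ supplies a local holomorphic section. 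Lifting the given nullhomotopy through this fibration produces $F^c$, which is automatically nullhomotopic.

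The main step, and the expected chief obstacle, is upgrading $F^c$ to a holomorphic lift while preserving the exact interpolation $F_x(p_j) = \alpha_j(x)$ for every $x \in X$. My plan is to apply a parametric Oka principle to $\mathrm{ev}$: the base is Oka for $n>1$, being the complement of finitely many affine subspaces in a Euclidean space and admitting an obvious dominating spray, while the fibers, namely the subgroups of $\Aut(\C^n)$ stabilizing the $N$ marked points, are flexible in the Anders\'en--Lempert sense because the density property of $\C^n$ restricts to the Lie algebra of complete holomorphic vector fields vanishing on $\{p_j\}$. Concretely I would exhaust $X$ by Stein compacta $T_1 \subset T_2 \subset \cdots$, approximate $F^c$ on each $T_k$ by a finite composition of time-$1$ maps of shears whose coefficients depend holomorphically on $x$, and patch across overlaps by a parametric Oka--Weil procedure, correcting at every stage by small $x$-dependent automorphisms in the stabilizer to restore the exact interpolation at $p_1,\ldots,p_N$. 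The resulting locally uniform limit $F$ satisfies $F_x(p_j) = \alpha_j(x)$, and since it is homotopic to $F^c$ it is itself nullhomotopic. The delicate point throughout is simultaneously achieving approximation, overlap compatibility, and exact interpolation; this is what dictates working with shears vanishing to prescribed order at the marked points.
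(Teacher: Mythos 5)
This statement is not proved in the paper: \tref{points} is an external result quoted from Kutzschebauch and Ramos-Peon \cite{KutzRamosArXiv}, and the paper says so immediately after stating it (``This theorem is proved in \cite{KutzRamosArXiv} for any Stein manifold with the density property instead of $\C^n$, but we are only interested in the complex Euclidean space''). There is therefore no in-paper proof against which to compare your argument; the theorem is used as a black box to derive Corollary~\ref{cor:moving}.

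Taking your sketch on its own merits: the lifting strategy through $\mathrm{ev}\colon\Aut(\C^n)\to(\C^n)^N\setminus\bigcup_{i\ne j}\{z_i=z_j\}$, first continuously via the nullhomotopy hypothesis and then holomorphically by an Anders\'en--Lempert type argument, is a reasonable road map, and the observations that the configuration space is Oka when $n>1$ and that the stabilizer of $p_1,\ldots,p_N$ carries a density-property Lie algebra of complete fields are both apt. The gap is in the central step, ``apply a parametric Oka principle to $\mathrm{ev}$.'' The Gromov--Forstneri\v c Oka principle applies to holomorphic fiber bundles or stratified elliptic submersions between finite-dimensional complex spaces; $\Aut(\C^n)$ is not a finite-dimensional complex manifold and $\mathrm{ev}$ is not a holomorphic submersion in any standard category, so ``the'' Oka principle is not available off the shelf. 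One must instead rebuild the whole induction by hand: exhaust $X$ by Stein compacta, approximate the continuous lift by holomorphic words of parametrized shears via a parametric Anders\'en--Lempert theorem, and at every stage correct the accumulated error by a small $x$-dependent automorphism in the stabilizer so as to restore exact interpolation at $p_1,\ldots,p_N$, with estimates that make the infinite composition converge in $\Aut(\C^n)$. Your sketch names this correction step but supplies no mechanism for it, yet the simultaneous achievement of approximation, compatibility across exhaustions, and exact interpolation is precisely the hard content of the theorem, not a routine Oka--Weil gluing. As written the outline is a plausible skeleton, but the proof lives almost entirely in what is left unwritten.
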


This theorem is proved in \cite{KutzRamosArXiv} for any Stein manifold with the density property instead of $\C^n$, 
but we are only interested in the complex Euclidean space. As stated by the authors, this is a parametric version 
of infinite transitivity, that is, the possibility of moving any given finite set of points 
into any other set of points with the same cardinality by using holomorphic automorphisms.

The following statement is a corollary to \tref{interpol} and \tref{points}.

\begin{corollary} \label{cor:moving}
Let $X$ be a Stein manifold, $\{ p_j \}_{j=1}^m \subset \C^n$ a finite set of points and let $k_j, \ 1 \leq j \leq m$ be integers. Consider the nullhomotopic holomorphic maps $\alpha\colon X \to (\C^n)^m \setminus \bigcup_{i \neq j} \{z_i=z_j\}$ and 
$P^j:X \to J^{k_j}_{p_j,\ast } (\C^n), \ 1 \leq j \leq m$  such that $P_x^j (p_j) = \alpha_j(x)$ for all $x \in X$ and $j=1,\ldots, m$.
Then there exists a holomorphic $F:X \to  \Aut(\C^n)$ such that $[F_x]_{p_j}^{k_j}= P_x$ for all $x \in X$ and  $j=1,\ldots, m$.
\end{corollary}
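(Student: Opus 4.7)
My plan is to reduce the interpolation problem at the moving set of points $\{\alpha_j(x)\}_{j=1}^m$ to an interpolation problem at the fixed set $\{p_j\}_{j=1}^m$, and then invoke \tref{interpol}. First I would apply \tref{points} to the nullhomotopic map $\alpha$ to obtain a nullhomotopic holomorphic map $G\colon X\to\Aut(\C^n)$ satisfying $G_x(p_j)=\alpha_j(x)$ for every $x\in X$ and $j=1,\ldots,m$. With $G$ in hand, I would define the reduced jet families
\[
\tilde P^j_x := [G_x^{-1}]^{k_j}_{\alpha_j(x)} \circ P^j_x \in J^{k_j}_{p_j,p_j}(\C^n),
\]
which depend holomorphically on $x$. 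By the chain rule for jets $[G_x]^{k_j}_{p_j}\circ \tilde P^j_x = P^j_x$, so if a holomorphic family $H\colon X\to\Aut(\C^n)$ with $[H_x]^{k_j}_{p_j}=\tilde P^j_x$ exists, then $F_x := G_x\circ H_x$ satisfies $[F_x]^{k_j}_{p_j} = P^j_x$ for each $j$, as required.

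The finite set $\{p_j\}_{j=1}^m$ is tame, so the finite-point version of \tref{interpol} (which, as observed after its statement, follows directly from \pref{main}) produces such an $H$ provided its topological hypothesis is met. This is the main obstacle: I must verify that the linear part $\tilde Q^j\colon X\to GL_n(\C)$ of $\tilde P^j$ at $p_j$ is nullhomotopic for each $j$. Writing $\tilde Q^j_x = [G_x'(p_j)]^{-1}\cdot Q^j_x$, where $Q^j$ denotes the linear part of $P^j$ at $p_j$, I would argue as follows. Since $G$ is nullhomotopic as a map into $\Aut(\C^n)$, evaluating the derivative at $p_j$ yields a nullhomotopic map $x\mapsto G_x'(p_j)$ from $X$ to $GL_n(\C)$, and its pointwise inverse is likewise nullhomotopic. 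Since $P^j$ is nullhomotopic into $J^{k_j}_{p_j,\ast}(\C^n)$ and the projection onto the linear part $J^{k_j}_{p_j,\ast}(\C^n)\to GL_n(\C)$ is continuous, $Q^j$ is nullhomotopic. The pointwise product of two nullhomotopic maps into the topological group $GL_n(\C)$ is again nullhomotopic, so $\tilde Q^j$ is.

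Having verified the topological hypothesis, \tref{interpol} applied to the tame finite set $\{p_j\}_{j=1}^m$ and the jet families $\tilde P^j$ supplies the desired $H\colon X\to\Aut(\C^n)$, and then $F_x := G_x\circ H_x$ solves the corollary. Beyond the nullhomotopy verification above, the argument is a formal manipulation of jet composition together with the two results \tref{points} and \tref{interpol}, so I do not anticipate further technical subtleties.
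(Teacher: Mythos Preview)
Your proposal is correct and follows essentially the same route as the paper: apply \tref{points} to obtain a nullhomotopic $G$, reduce to the fixed-point jets $[G_x^{-1}\circ P_x^j]_{p_j}^{k_j}$, invoke \tref{interpol}, and set $F_x=G_x\circ H_x$. You are in fact slightly more careful than the paper in the nullhomotopy verification, explicitly factoring $\tilde Q^j_x=[G_x'(p_j)]^{-1}Q^j_x$ and using that both factors are nullhomotopic, whereas the paper only cites the nullhomotopy of $G$.
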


\begin{proof}
Since $\alpha$ is nullhomotopic, we can apply \tref{points} in order to obtain $G:X\to \Aut(\C^n)$ such that $G_x(p_j)=\alpha_j(x), \ 1 \leq j \leq m$. Consider the jets $[G_x^{-1} \circ P_{x}^{j}]_{p_j}^{k_j} \in J_{p_j , p_j}^{k_j} (\C^n)$ for $1 \leq j \leq m$ and apply \tref{interpol} to obtain $H:X \longrightarrow Aut(\C^n)$ such that $[H_x]_{p_j}^{k_j} = [G_x^{-1} \circ P_{x}^{j}]_{p_j}^{k_j}, \ \forall x \in X, \ 1 \leq j \leq m$. This is possible because the considered jets are nullhomotopic, since the map $G$ is itself nullhomotopic.
Then the map $F:X \longrightarrow Aut(\C^n)$ defined by $F_x = G_x \circ H_x$ has the required properties.
\end{proof}

\section{Possible generalizations}

After the jet interpolation theorem of Forstneri\v c \cite{Forstneric1999}, it didn't take long before Varolin proved a similar result for any manifold with the density property \cite{Varolin2000}. He there also proved jet interpolation only at a point but described in a more precise way what kind of automorphisms one can use. We now give some of his definitions that will be useful to present a possible generalization of Vaserstein problem.

\begin{definition} Let $M$ be a complex manifold and $\mathfrak{g} \subset \mathfrak{X}_{\Olo}(M)$ be a Lie subalgebra of holomorphic vector fields on $M$. We say that $\mathfrak{g}$ has the density property if the subalgebra $\mathfrak{g}_{int}$ generated by complete vector fields is dense in $\mathfrak{g}$ equipped with the uniform topology on compact sets. We say that $M$ has the density property if $ \mathfrak{X}_{\Olo}(M)$ does.
\end{definition}

This definition has first appeared in \cite{Varolin2001} and since became an interesting way to study manifolds and complex structures. We suggest the interested reader to refer to Varolin's work for more insights.

Given a vector field $X \in \mathfrak{X}_{\Olo}(M)$, we denote its flow by $\varphi_X^t$.

\begin{definition}
Let $\mathfrak{g} \subset \mathfrak{X}_{\Olo}(M)$ be a Lie subalgebra of holomorphic vector fields and $p\in M$. We write $J^k_{p,\ast}(M)_\mathfrak{g}$ for the set of k-jets at $p$ of the form $[\varphi_{X_n}^{t_n} \circ \dots \circ \varphi_{X_1}^{t_1}]_p^k$, where $X_1, \dots, X_n \in \mathfrak{g}$.
\end{definition}

\begin{definition}
We denote by $\Aut_\mathfrak{g}(M)$ the subgroup of $\Aut(M)$ generated by time-1 flows of complete vector fields in $\mathfrak{g}$.
\end{definition}

Varolin proved that if $\mathfrak{g}$ has the density property, then any jet in $J^k_{p,\ast}(M)_\mathfrak{g}$ can be obtained as the jet in $p$ of an automorphism in $\Aut_\mathfrak{g} (M)$.

Of course, the nature of the elements in  $J^k_{p,\ast}(M)_\mathfrak{g}$ is strictly related to the Lie algebra $\mathfrak{g}$. For $k=0$ this is clear by the following definition.

\begin{definition}
Given $p \in M$ and a Lie algebra $\mathfrak{g} \subset \mathfrak{X}_{\Olo}(M)$, we define the orbit of $\mathfrak{g}$ through $p$ as
\[ R_\mathfrak{g} (p) := \{ \varphi_{X_n}^{t_n} \circ \dots \circ \varphi_{X_1}^{t_1} (p) :  X_1, \dots, X_n \in \mathfrak{g}\}. \]
\end{definition}
The mentioned author observed that $R_\mathfrak{g} (p)$ is a complex manifold and that $(t_1, \dots t_n) \mapsto \varphi_{X_n}^{t_n} \circ \dots \circ \varphi_{X_1}^{t_1} (p)$ gives holomorphic local coordinates if  $X_1, \dots, X_n$ form a basis of the tangent space when evaluated at $p$.
 
 If $\mathfrak{g}$ has the density property, it is natural to expect a behaviour similar to the one showed in the present paper for holomorphic families of jets in $J^k_{p,\ast}(M)_\mathfrak{g}$. Following Varolin's proof for the nonparametric case, we are led to consider the following generalization of the Vaserstein problem:

\begin{namedthm*}{Generalized Vaserstein Problem} \label{GVP}
Let $X$ be a Stein manifold, $M$ be a complex manifold and $\mathfrak{g} \subset \mathfrak{X}_{\Olo}(M)$ be a Lie algebra with the density property. Given a nullhomotopic holomorphic function $F:X \rightarrow R_\mathfrak{g} (p)$, then there exist complete vector fields $X_1, \dots X_N \in \mathfrak{g}_{int}$ and holomorphic functions $f_1,\dots,f_N:X \rightarrow \C$ such that 
\[ F(x)=\varphi_{X_N}^{f_N(x)} \circ \dots \circ \varphi_{X_1}^{f_1(x)} (p).
\] 
\end{namedthm*}

\tref{vaser} solves the \emph{generalized Vaserstein problem} in the following special situation. The finite dimensional Lie algebra $\mathfrak{sl_n}(\C)$ of left invariant vector fields has the density property, as it consists of complete vector fields only. The solution of \tref{vaser} appears to be more restrictive since the authors prove that the representation uses only the special vector fields corresponding to the one parameter subgroups $t \to \exp(t e_{i,j}), \ i \neq j$, where $e_{i,j}$ is the matrix with entry $1$ in place $(i,j)$ and $0$ elsewhere. However it is only a matter of simple linear algebra to show that any representation using left invariant vector fields can be reduced to a representation using these special fields only. The generalized Vaserstein problem becomes much less restrictive if one enlarges the Lie algebra from $\mathfrak{sl_n}(\C)$ to the algebra of all holomorphic vector fields on $M=SL_n(\C)$ (which has the density property by the work of Toth and Varolin \cite{TothVarolin2000}). We wonder whether the solution is easier in this case than that of the original Vaserstein problem.

In our opinion, the ability to solve the generalized Vaserstein problem could involve an answer to the following question, which has its own independent charm.

\begin{question}
Given an open set $U \subset M$, a vector field $V \in  \mathfrak{X}_{\Olo}(U)$ and a Lie algebra $\mathfrak{g} \subset \mathfrak{X}_{\Olo}(M)$, what conditions do we need to approximate $V$ by elements of $\mathfrak{g}$ on compact subsets of $U$?
\end{question}

We know that for a Stein manifold $M$ with the density property we only need $U$ to be Runge in $M$ (setting $\mathfrak{g} = \mathfrak{X}_{\Olo}(M)$).
This is one of the key ingredients of the well known Andersen-Lempert theorem \cite[Theorem 2]{KalimanKutzSurvey}. We expect an answer to this question to give a new version of this result, which would allow to obtain elements of $\Aut_\mathfrak{g}(M)$ with specific local properties. 
An instance of such a result has been proved by Kutzschebauch, Leuenberger and Liendo in \cite[Theorem 6.3]{KutzLeuenbergerLiendo2015}, where they deal with singular varieties and vector fields vanishing on a subvariety containing the singular locus.

We are able to give a positive answer to parametric jet interpolation for some Lie algebras already considered in literature.

\begin{definition}
Given $k,n \in \N$ with $1\le k<n$, let $\mathfrak{g}_0^{n,k} \subset \mathfrak{X}_{\Olo}(\C^n)$ be the Lie algebra of holomorphic vector fields vanishing on $\{z_{k+1}= \dots = z_n = 0\} \cong \C^k$.
\end{definition}

It is known that this Lie algebras have the density property \cite{Varolin2001}. We will denote by $\mathcal{H}^j_{\mathfrak{g}_0^{n,k}}(\C^n)$ the Euclidean space of homogeneous polynomial vector fields of degree $j$ vanishing on $\{z_{k+1}= \dots = z_n = 0\}$.

\begin{theorem} \label{density}
Let $X$ be a Stein manifold  and $P:X \to J^K_{0,0}(\C^n)_{\mathfrak{g}_0^{n,k}}$ be a holomorphic family of $K$-jets pointwise fixing the 
linear subspace $\{z_{k+1}= \dots = z_n = 0\}$. Then there exists  a holomorphic map $F:X \to \Aut_{\mathfrak{g}_0^{n,k}} (\C^n)$ such that 
for any $x \in X$ the following hold:
\begin{enumerate}[\rm (i)]
\item $F_x(z)=P_x(z)+O(|z|^{K+1}) \text{ for } z \rightarrow 0$;
\vspace{1mm}
\item $F_x(z_1,\dots, z_k, 0, \dots, 0)=(z_1,\dots, z_k, 0, \dots, 0) \text{ for any } (z_1,\dots, z_k) \in \C^k$. 
\end{enumerate}
\end{theorem}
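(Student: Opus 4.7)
The strategy mirrors the inductive construction of Propositions \ref{main} and \ref{onepoint}: decompose
$P_x(z) = Q_x z + P_x^2(z) + \cdots + P_x^K(z)$
into homogeneous components and build holomorphic families $S_1^x,\ldots,S_K^x \in \Aut_{\mathfrak{g}_0^{n,k}}(\C^n)$ whose composition $F_x := S_K^x \circ \cdots \circ S_1^x$ interpolates the given jet up to order $K$ at the origin. Since $P_x$ pointwise fixes $\C^k$, the linear part has the block form $Q_x = \begin{pmatrix} I_k & A_x \\ 0 & B_x \end{pmatrix}$ with $A_x$ a holomorphic $k\times(n-k)$-matrix and $B_x\colon X \to GL_{n-k}(\C)$ holomorphic, while each $P_x^j$ for $j \geq 2$ lies in $\mathcal{H}^j_{\mathfrak{g}_0^{n,k}}(\C^n)$. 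Because every automorphism in $\Aut_{\mathfrak{g}_0^{n,k}}(\C^n)$ pointwise fixes $\C^k$, condition (ii) of the theorem is automatic for any such composition, so only condition (i) must be enforced.

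For the higher-order components ($j \geq 2$), I would first prove an analog of \cite[Lemma A.6]{KutzLodin2013} giving a basis of $\mathcal{H}^j_{\mathfrak{g}_0^{n,k}}(\C^n)$ by elementary polynomial vector fields of shear type $(\lambda(z))^j v$ and overshear type $(\mu(z))^{j-1}\langle z,w\rangle w$, with the vectors $v,w$ chosen to have vanishing first $k$ components so that the corresponding field lies in $\mathfrak{g}_0^{n,k}$. Each basis element is then interpolated by a shear or overshear generated by a complete field in $\mathfrak{g}_0^{n,k}$, exactly as in the proof of Proposition \ref{main}, with the required holomorphic coefficient functions on $X$ furnished by \lref{lem:exist}. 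No topological obstruction enters at this stage: these corrections live in the Schwarz-type subgroup $\mathcal{S}^{n,k} := \{F \in \Aut_{\mathfrak{g}_0^{n,k}}(\C^n) : dF_0 = \Id\}$, which is contractible via the retraction $F_t(z) = t^{-1}F(tz)$ (this rescaling preserves $\mathfrak{g}_0^{n,k}$ since the condition of vanishing on $\C^k$ is invariant under $z \mapsto tz$).

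For the linear part, the block $\begin{pmatrix} I_k & A_x \\ 0 & I_{n-k}\end{pmatrix}$ is realized immediately as the time-$1$ flow of the complete vector field $\sum_{i \leq k,\ j > k} A_{ij}(x)\,z_j\,\partial_{z_i} \in \mathfrak{g}_0^{n,k}$. The remaining factor $\begin{pmatrix} I_k & 0 \\ 0 & B_x\end{pmatrix}$ must be realized using the complete linear fields $z_l\,\partial_{z_i}$ for $i,l > k$ with $i \neq l$ (elementary shears yielding the $SL_{n-k}$ part) and the scaling fields $z_j\,\partial_{z_j}$ for $j > k$ (diagonal scalings covering the determinant). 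I expect this last step, namely the \emph{parametric} factorization of $B_x \in GL_{n-k}(\C)$ as a product of such elementary flows, to be the main obstacle: it is precisely an instance of the generalized Vaserstein problem posed in this section for $\mathfrak{g}_0^{n,k}$. Crucially, in contrast to the setting of \tref{vaser}, the Lie algebra $\mathfrak{g}_0^{n,k}$ contains the scaling fields $z_j\,\partial_{z_j}$ with nontrivial divergence (which are unavailable in $\mathfrak{sl}_n$), and I expect that this additional flexibility, combined with the contractibility of $\mathcal{S}^{n,k}$ concentrating the whole topological content into the linear piece, will allow the required parametric decomposition to be carried out without imposing any explicit nullhomotopy hypothesis on $B_x$.
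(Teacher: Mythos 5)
Your overall inductive skeleton (split $P_x$ into homogeneous pieces, build families $S_j^x$ so that the composition has the right $K$-jet, and note that membership in $\Aut_{\mathfrak{g}_0^{n,k}}(\C^n)$ makes condition (ii) automatic) is the same as the paper's. But there are two substantive gaps, one in each of your two construction steps, and in both cases the paper avoids the difficulty you run into.

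\textbf{The linear part.} You treat the factorization of $B_x\colon X\to GL_{n-k}(\C)$ into a holomorphic product of elementary flows as a new Vaserstein-type problem and leave it as a hopeful expectation. This is unnecessary. Unlike \pref{main}, in \tref{density} there are no other points $a_i$ at which the identity jet must be preserved, so one is free to act by \emph{linear} automorphisms, exactly as in the proof of \pref{onepoint} (where the paper simply takes $S_1^x=Q_x$). The paper does the same here: set $S_1^x := Q_x$. What must be checked is only that each individual $Q_x$ lies in $\Aut_{\mathfrak{g}_0^{n,k}}(\C^n)$, and this is elementary pointwise linear algebra (Gaussian elimination on the $B_x$ block by shears $z_l\partial_{z_i}$, $i,l>k$, together with scalings $z_j\partial_{z_j}$, $j>k$, and one shear $\sum_{i\le k,\ j>k}(\cdot)z_j\partial_{z_i}$ for the $A_x$ block). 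The definition of a holomorphic map $X\to\Aut(\C^n)$ only requires joint holomorphy of $(x,z)\mapsto Q_x(z)$, which is given; the decomposition into time-one flows need not be holomorphic in $x$. Your proposal confuses a pointwise membership condition with a parametric factorization problem.

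\textbf{The higher-order part.} You propose proving an analog of \cite[Lemma A.6]{KutzLodin2013}, a basis of $\mathcal{H}^j_{\mathfrak{g}_0^{n,k}}(\C^n)$ by \emph{complete} shear fields $(\lambda(z))^j v$ and overshear fields $(\mu(z))^{j-1}\langle z,w\rangle w$ lying in $\mathfrak{g}_0^{n,k}$. No such basis exists. First, your stated constraint is wrong for shears: requiring $v$ to have vanishing first $k$ components does \emph{not} put $\lambda^j v$ in $\mathfrak{g}_0^{n,k}$; one needs $\lambda$ itself to vanish on $\C^k$. Second, and more fundamentally, already for $n=2$, $k=1$, $j=2$ the space $\mathcal{H}^2_{\mathfrak{g}_0^{2,1}}(\C^2)$ is $4$-dimensional, spanned by $z_1z_2\partial_{z_1}$, $z_2^2\partial_{z_1}$, $z_1z_2\partial_{z_2}$, $z_2^2\partial_{z_2}$, but the complete shear/overshear fields in $\mathfrak{g}_0^{2,1}$ of degree $2$ span only the first three; the field $z_2^2\partial_{z_2}$ is \emph{not even complete} (its flow $z_2(t)=z_2(0)/(1-tz_2(0))$ blows up in finite time), so it cannot be a complete shear or overshear. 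This is precisely why the paper instead uses \lref{basis} (Varolin's argument via the operator $\delta$), which produces a basis of $\mathcal{H}^r_{\mathfrak{g}_0^{n,k}}(\C^n)$ whose elements are \emph{Lie combinations} of complete vector fields in $\mathfrak{g}_0^{n,k}$, and then invokes \cite[Proof of Lemma 2.1]{Varolin2000} to realize the prescribed $r$-jet by composing flows (and commutators of flows) of those complete fields, parametrically in $x$. The Lie-bracket step is not a convenience; it is what makes the inductive step possible.
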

We point out that property (ii) is another way of saying that the image of $F$ belongs to the subgroup $\Aut_{\mathfrak{g}_0^{n,k}} (\C^n) \subset \Aut(\C^n)$.

Before providing a proof of Theorem \ref{density}, we will establish a lemma similar to the ones presented in \cite{KutzLodin2013}, which were useful in the proof of \pref{main}.

\begin{lemma} \label{basis}
For any integer $r>1$ there exists a basis of $\mathcal{H}^r_{\mathfrak{g}_0^{n,k}}(\C^n)$ such that each element is a 
Lie combination of complete vector fields in $\mathfrak{g}_0^{n,k}$.
\end{lemma}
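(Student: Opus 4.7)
The natural monomial basis of $\mathcal{H}^r_{\mathfrak{g}_0^{n,k}}(\C^n)$ consists of the vector fields $z^\alpha\partial_{z_j}$ with $|\alpha|=r$, $1\le j\le n$, and $\alpha_l\ge 1$ for some $l>k$, so that the coefficient $z^\alpha$ vanishes on $\Lambda:=\{z_{k+1}=\dots=z_n=0\}$. My plan is to show that each such monomial is a Lie combination of complete vector fields in $\mathfrak{g}_0^{n,k}$, which are the shears $z^\beta\partial_{z_m}$ (with $\beta_m=0$ and $z^\beta|_\Lambda\equiv 0$) together with the overshears $z_m z^\beta\partial_{z_m}$ (with $\beta_m=0$; the condition $z_m z^\beta|_\Lambda\equiv 0$ is automatic when $m>k$). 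This mirrors the basis of \cite[Lemma A.6]{KutzLodin2013}, adjusted so that every ingredient vanishes on $\Lambda$.

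The argument proceeds by induction on $\alpha_j$. The base cases are immediate: when $\alpha_j=0$ the monomial is itself a shear in $\mathfrak{g}_0^{n,k}$, and when $\alpha_j=1$ it can be written as the overshear $z_j z^{\alpha-e_j}\partial_{z_j}$, which lies in $\mathfrak{g}_0^{n,k}$ because $z^\alpha$ vanishes on $\Lambda$. For the inductive step with $\alpha_j=s\ge 2$, write $\alpha=s e_j+\gamma$ with $\gamma_j=0$ and use the identity
\begin{equation*}
[z_j z_l\partial_{z_j},\, z_j^{s-1}z^\gamma\partial_{z_l}] = (s-1)\,z_j^{s-1}z_l z^\gamma\partial_{z_l} - z^\alpha\partial_{z_j},
\end{equation*}
valid for any $l\ne j$ with $\gamma_l=0$. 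This rearranges to express $z^\alpha\partial_{z_j}$ as the difference of a complete overshear (the first summand, whose $l$-exponent equals $1$) and a Lie bracket of two complete fields in $\mathfrak{g}_0^{n,k}$. The index $l$ must be chosen so that both $z_j z_l\partial_{z_j}$ and $z_j^{s-1}z^\gamma\partial_{z_l}$ lie in $\mathfrak{g}_0^{n,k}$: when $j>k$ any $l\ne j$ with $\gamma_l=0$ works; when $j\le k$ one takes $l>k$ and uses that $z^\gamma$ then vanishes on $\Lambda$ (inherited from the vanishing of $z^\alpha$ together with $j\le k$).

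The main obstacle is producing such an $l$ when $\gamma$ has all of its relevant components positive---for instance, in the extremal case $n-k=1$ with $j=1$, or more generally whenever $\gamma_m\ge 1$ for every $m>k$ in the case $j\le k$. To handle this I would first apply analogous bracket identities that transfer one power of some $z_p$ (with $p>k$ and $\gamma_p\ge 1$) from $\gamma$ into the bracket slots, strictly reducing the number of positive components of $\gamma$ inside $\{k+1,\dots,n\}$; iterating this redistribution brings one to a configuration where the main formula applies, at which point the induction on $\alpha_j$ closes. This combinatorial step, in the spirit of \cite[Lemmas A.5 and A.6]{KutzLodin2013}, is the technical core of the proof.
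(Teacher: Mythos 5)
Your approach differs from the paper's. The paper defines a divergence-type operator $\delta$ on $\mathcal{H}^r_{\mathfrak{g}_0^{n,k}}(\C^n)$ (with the correction term $-X_n/z_n$ when $k=n-1$), splits the space as $\ker\delta\oplus W$, and then cites the proofs of Lemmas 5.4, 5.6 and 5.7 of Varolin for bases of the two summands consisting of Lie combinations of complete fields in $\mathfrak{g}_0^{n,k}$. You instead try to express each monomial $z^\alpha\partial_{z_j}$ directly as a Lie combination of shears and overshears in $\mathfrak{g}_0^{n,k}$, which would be more self-contained if it closed.

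However, as written there is a genuine gap, which you yourself acknowledge: the ``redistribution'' step needed when no admissible index $l$ exists is only gestured at, and it is exactly where the work lies. The bracket identity you display is correct, but already in the smallest case $n=2$, $k=1$, the monomial $z_1^2z_2\partial_{z_1}$ has $j=1\le k$, $\gamma=(0,1)$, and there is no $l>k$ with $\gamma_l=0$. Applying the analogous bracket gives
\[
[z_1^2z_2\partial_{z_2},\,z_2\partial_{z_1}]=z_1^2z_2\partial_{z_1}-2\,z_1z_2^2\partial_{z_2},
\]
which trades the target for a new monomial of the same degree with the same obstruction ($j=2>k$, $\gamma=(1,0)$, $\gamma_1\ne 0$). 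One must further compute
\[
[z_1z_2\partial_{z_1},\,z_1z_2\partial_{z_2}]=z_1z_2^2\partial_{z_2}-z_1^2z_2\partial_{z_1}
\]
and solve the resulting $2\times 2$ linear system to recover either monomial. Such a scheme can presumably be made to work, but choosing the right brackets and checking that the linear systems that arise are always solvable for all $n$, $k$, $r$ is precisely the substance of the cited Varolin lemmas, and it is absent from your sketch. As it stands the proof is incomplete.
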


\begin{proof}
The proof of this lemma is contained in \cite[Proof of 5.1.1]{Varolin2000}. 
Given a vector field $X=(X_1, \dots, X_n)$ in $\mathcal{H}^r_{\mathfrak{g}_0^{n,k}}(\C^n)$, 
we define 
\[
	\delta(X) 
	:= 
	\begin{cases}
	\sum_{j=1}^n \frac{\partial  X_j}{\partial z_j};  & k<n-1, \\ 
	\sum_{j=1}^n \frac{\partial  X_j}{\partial z_j} - \frac{X_n}{z_n};  & k=n-1. \\
	\end{cases}
\]
We see that $\delta$ is linear and $\mathcal{H}^r_{\mathfrak{g}_0^{n,k}}(\C^n)$ splits into $\ker \delta \oplus W$, where $W$ is a vector space isomorphic to $\delta \mathcal{H}^r_{\mathfrak{g}_0^{n,k}}(\C^n)$. The latter is itself isomorphic to the space of homogeneous polynomials of degree $r-1$ for $k < n-1$, while each monomial has to be divisible by $z_n$ when $k=n-1$ \cite{Varolin2000}.

A basis for $\ker \delta$ such that each element is a Lie combination of complete vector fields in $\mathfrak{g}_0^{n,k}$ is provided by \cite[Proof of Lemma 5.4]{Varolin2000}, while a basis for $W$ with the same property is given by \cite[Proof of Lemma 5.6, 5.7]{Varolin2000}.
\end{proof}

\begin{proof}[Proof of \tref{density}]
We proceed as in the proof of \tref{main}. Precisely, we are looking for holomorphic maps 
$S_j \colon X \to  \Aut_{\mathfrak{g}_0^{n,k}} (\C^n)$, $j=1,\ldots,K$  such that
\begin{equation}  \tag{$\star$}
P_x\circ (S_1^x)^{-1} \circ \dots \circ (S_{j-1}^x)^{-1}  =z+ P_x^j (z)+O(|z|^{j+1}) \text{ for } z \rightarrow 0 
\end{equation}
holds for every $j=1, \ldots, K$, where $P^j \colon X \to \mathcal{H}^j_{\mathfrak{g}_0^{n,k}}(\C^n)$ is a holomorphic family of 
homogeneous polynomial vector fields.

As in \pref{onepoint}, choose $S_1^x$ to be the linear part of $P_x$. By induction, suppose we have families satisfying ($\star$) for $1 \leq j \leq r-1<K$. We need to find $S_r \colon X \to \Aut_{\mathfrak{g}_0^{n,k}} (\C^n)$ such that
\[
S_r^x(z)=z+P^r_x(z)+O(|z|^{k+1}) \text{ for } z \rightarrow 0.
\]

Thanks to \lref{basis} we can write $P^r_x$ as a Lie combination of families of complete vector fields in $\mathfrak{g}_0^{n,k}$, each depending holomorphically on $x \in X$.
It is known \cite[Proof of Lemma 2.1]{Varolin2000} that a composition of the time one maps of these families of complete vector fields gives a family of automorphisms having the prescribed $r$-jet at the origin. We set $S_r$ equal to this composition to conclude the induction step.

The family of automorphisms $F_x=S^x_K \circ \dots \circ S_1^x$ has the correct family of $K$-jets in the origin and pointwise fixes the set $\{z_{k+1}= \dots = z_n = 0\}$.
\end{proof}

\section{Acknowledgements}
The author would like to thank F. Forsteri\v c for the valuable discussion and the review of previous versions of this paper. We also thank A. Ramos-Peon for pointing out that the map given by \tref{points} is null-homotopic, giving a simpler statement and proof of \cref{cor:moving} and the referee for suggesting a better exposition. It is also important to mention the MR grant from ARRS that is responsible for providing me the opportunity to carry out this research.

\bibliography{Biblio}
\bibliographystyle{abbrv} 
\end{document}